\theoremstyle{plain}
\newtheorem{thm}{Theorem}
\theoremstyle{definition}
\newtheorem{defn}{Definition}
\theoremstyle{remark}
\newtheorem{rem}{Remark}
\newtheorem{ex}{Example}
\newcommand{\prn}[1]{\left(#1\right)}
\newcommand{\norm}[1]{\left\|#1\right\|}
\newcommand{\pd}[2]{\frac{\partial#1}{\partial#2}}
\begin{document}
\parindent0ex
\parskip2ex

\title[Performance of AMG for Non-Symmetric Matrices]
{Performance of Algebraic Multigrid Methods
for Non-Symmetric Matrices arising in Particle Methods}
\author{Benjamin Seibold}
\address{Department of Mathematics \\ Temple University \\
1801 N.~Broad Street \\ Philadelphia, PA 19122}
\email{seibold@temple.edu}
\urladdr{http://www.math.temple.edu/\~{ }seibold}
\subjclass[2000]{65N06, 65N55, 90C05}
\keywords{finite difference, meshfree, M-matrix, algebraic multigrid, AMLI}
\begin{abstract}
Large linear systems with sparse, non-symmetric matrices arise in the modeling of
Markov chains or in the discretization of convection-diffusion problems. Due to their
potential to solve sparse linear systems with an effort that is linear in the number
of unknowns, algebraic multigrid (AMG) methods are of fundamental interest for such
systems. For symmetric positive definite matrices, fundamental theoretical convergence
results are established, and efficient AMG solvers have been developed.
In contrast, for non-symmetric matrices, theoretical convergence results have been
provided only recently. A property that is sufficient for convergence is that the
matrix be an M-matrix.
In this paper, we present how the simulation of incompressible fluid flows with
particle methods leads to large linear systems with sparse, non-symmetric matrices.
In each time step, the Poisson equation is approximated by meshfree finite differences.
While traditional least squares approaches do not guarantee an M-matrix structure,
an approach based on linear optimization yields optimally sparse M-matrices.
For both types of discretization approaches, we investigate the performance of a
classical AMG method, as well as an AMLI type method.
While in the considered test problems, the M-matrix structure turns out not to be
necessary for the convergence of AMG, problems can occur when it is violated. In
addition, the matrices obtained by the linear optimization approach result in fast
solution times due to their optimal sparsity.
\end{abstract}

\maketitle

\section{Introduction}
The efficient and robust numerical approximation of the Poisson equation is a key
problem in many applications. In some special cases (such as spectral methods that
use the fast Fourier transform), the Poisson equation can be solved in a single
step. However, in most scenarios, the solution process is done in two consecutive steps:
first, a discretization transforms the Poisson equation into a finite linear system;
second, the linear system is solved. While in most investigations on multigrid methods
the discretization is assumed as canonically given, in this paper we present an
application in which many possible discretization strategies exist, and the interplay
between discretization and performance of multigrid solvers is of interest.

We consider Poisson problems that arise in Lagrangian particle methods for the
simulation of incompressible fluid flows. Particle methods solve the governing fluid
equations on a set of nodes that move with the flow. Thus the discretization is done
in the more natural Lagrangian frame of reference. As a consequence, convective terms
are solved exactly. One of the earliest Lagrangian particle methods
is \emph{smoothed particle hydrodynamics} (SPH) \cite{Lucy1977,GingoldMonaghan1977},
which was first applied to astrophysical fluid dynamics, and later expanded to a
wider class of hydrodynamic equations \cite{Monaghan1988,Monaghan1992}.
Generalization of the SPH approach, that use moving least squares approximations for
derivatives, have been presented by Dilts \cite{Dilts1999}
and Kuhnert \cite{KuhnertDiss1999}. Both approaches have been successfully applied
to problems with complex, time-dependent geometries \cite{KuhnertTiwari2002} and
free surface flows \cite{KuhnertTiwariPasadena2003}. Further examples are shown
in \cite{SeiboldMeshfree2007}. The problems considered in this paper are mostly
formulated in the terminology of those latter two approaches. However, many aspects
transfer to other types of particle methods.
A fundamental difficulty with Lagrangian particle methods is that one gives away
the control over the positions of the computational nodes. While one can (and in
fact should) remove particles where not desired, and insert new particles where
required, the points are in general completely unstructured and without connections
between each other.

A classical solution strategy for incompressible flows is the Chorin projection
method \cite{Chorin1968}, which has been successfully applied to Lagrangian particle
methods \cite{CumminsRudman1998,CumminsRudman1999}.
In each time step, the flow is first evolved neglecting
pressure, which yields a velocity field that is not incompressible. Then the
velocity field is modified by a gradient field that is chosen so that
incompressibility is restored. This correction step requires the solution of a Poisson
equation. As noted above, the arrangement of the particles is in a general,
unstructured cloud of points. And the Poisson equation has to be approximated on
this geometry.

One possibility to approximate the Poisson equation is using a finite element approach.
For instance, one can generate a triangulation \cite{Shewchuk1999}
(or another type of mesh) between the points, and apply a traditional finite element
approach \cite{StrangFix1973}.
However, mesh generation is an expensive task, especially in 3d, and a good mesh
generator may not always be available. In the context of particle methods, one
rarely is willing to make such an investment.
An alternative is the use of meshfree finite element approaches, such as
the \emph{element-free Galerkin method} (EFGM) \cite{BelytschkoGuLu1994},
or the \emph{partition of unity finite element method}
(PUFEM) \cite{BabuskaMelenk1996,BabuskaMelenk1997}.
Similar approaches \cite{GriebelSchweitzer2000} have been applied in a
multilevel context \cite{GriebelSchweitzer2002_2}.
A key advantage of all these approaches is that they are based on inner products
between (meshfree) basis functions, hence they lead to symmetric linear systems
to be solved. A fundamental drawback is that the computation of the inner products
requires quadratures, which in many cases can only be achieved by introducing a regular
background grid. This step is not only technically inconvenient, it is also costly.
Another difficulty with finite element approaches is the treatment of general boundary
conditions, which arise in multi-phase flows in complex, time-dependent domains.

In contrast, meshfree finite difference methods do not require basis functions
or quadrature, and boundary conditions can be implemented directly. At each point,
the approximation of the Laplace operator is based on a local Taylor expansion.
A first approach by Perrone and Kao \cite{PerroneKao1975} was later extended by
Liszka, Orkisz, et al.~\cite{LiszkaOrkisz1980,DemkowiczKarafiatLiszka1984} to
the \emph{generalized finite difference method} (GFDM).
A detailed presentation of common particle methods and meshfree approaches is provided
in \cite{SeiboldDiss2006}.
In this paper, we outline the fundamental conditions for a consistent approximation
of the Laplace operator in Sect.~\ref{seibold:sec:fd_poisson}.

A key drawback of meshfree finite difference approaches is that the resulting linear
systems are in general non-symmetric. As we explain in
Sect.~\ref{seibold:sec:non_symmetry_AMG}, this non-symmetry cannot be overcome by any
simple means. In the same section, we outline theoretical results on the convergence of
algebraic multigrid approaches for non-symmetric matrices. A property sufficient for
convergence is an M-matrix structure. We present fundamental properties of M-matrices
and their relevance for multilevel methods in Sect.~\ref{seibold:sec:m_matrices}.

In addition to the non-symmetry of the meshfree finite difference matrices
classical least squares approaches, as outlined in
Sect.~\ref{seibold:sec:least_squares_vs_linear_minimization},
involve two other difficulties.
First, the arising matrices are about six times as dense as finite difference
matrices on rectangular grids.
Second, an M-matrix structure is not guaranteed, and is---unless the point geometry
is sufficiently nice---in general violated.
Unlike the non-symmetry, these latter two problems can be overcome,
by a new approach that is based on linear sign-constrained optimization.
This approach is presented in Sect.~\ref{seibold:sec:least_squares_vs_linear_minimization},
and conditions for its successful application are provided in
Sect.~\ref{seibold:sec:lp_solution_existence_connectivity}.
In Sect.~\ref{seibold:sec:effort_generation}, theoretical predictions about the computational
cost to generate the Poisson matrices with least squares vs.~linear optimization
approaches are given.
In Sect.~\ref{seibold:sec:numerics}, we numerically investigate the performance of
various multigrid approaches for the matrices constructed by the different types of
minimization approaches.
Conclusions and an outlook are given in Sect.~\ref{seibold:sec:conclusions_outlook}.

\section{Meshfree Finite Differences for the Poisson Equation}
\label{seibold:sec:fd_poisson}
Consider the Poisson equation to be solved inside a domain
$\Omega\subset\mathbb{R}^d$
\begin{equation}
\begin{cases}
  -\Delta u = f &\mathrm{in~}\Omega \\
\quad\;\; u = g &\mathrm{on~}\Gamma_D \\
\quad \frac{\partial u}{\partial n} = h &\mathrm{on~}\Gamma_N
\end{cases}
\label{seibold:eq:poisson_equation}
\end{equation}
where $\Gamma_D\cup\Gamma_N=\partial\Omega$. We consider the case
of \eqref{seibold:eq:poisson_equation} possessing a unique smooth solution.

Let a point cloud $X=\{\vec{x}_1,\dots,\vec{x}_n\}\subset\overline{\Omega}$ be given,
which consists of interior points $X_i\subset\Omega$ and boundary points
$X_b\subset\partial\Omega$. The point cloud is meshfree, i.e.~no information about
connection of points is provided. Meshfree finite difference approaches convert
problem \eqref{seibold:eq:poisson_equation} into a linear system
\begin{equation}
A\cdot\vec{\hat u} = \vec{\hat f}\;,
\label{seibold:eq:poisson_system}
\end{equation}
where the vector $\vec{\hat u}$ contains approximations to the values $u(\vec{x}_i)$.
The $i$-th row of the matrix $A$ consists of the \emph{stencil} corresponding to the
point $\vec{x}_i$.

In the following, we derive conditions on finite difference stencils to yield an
approximation to the Laplace operator.
Consider a function $u\in C^2(\Omega\subset\mathbb{R}^d,\mathbb{R})$. At each interior
point $\vec{x}_0$, we wish to approximate $\Delta u(\vec{x}_0)$ using the function
values of a finite number of points in a circular neighborhood
$(\vec{x}_0,\vec{x}_1,\dots,\vec{x}_m)\in B(\vec{x}_0,r)$, where
$B(\vec{x}_0,r) = \{\vec{x}\in\overline{\Omega}:\|\vec{x}-\vec{x}_0\|_2<r\}$.
Note that the Euclidian norm is a natural choice in meshfree methods due to its
rotational symmetry. Define the distance vectors
$\vec{\bar x}_i = \vec{x}_i-\vec{x}_0 \ \forall i=0,\dots,m$.
The function value at each neighboring point $u(\vec{x}_i)$ is given by a Taylor
expansion as
\begin{equation*}
u(\vec{x}_i) = u(\vec{x}_0)+\nabla u(\vec{x}_0)\cdot\vec{\bar x}_i
+\tfrac{1}{2}\nabla^2 u(\vec{x}_0):\prn{\vec{\bar x}_i\cdot\vec{\bar x}_i^T}+e_i\;.
\end{equation*}
Here we use the Frobenius inner product $A:B=\sum_{i,j}a_{ij}b_{ij}$. The error in
the expansion is of order $e_i = O(r^3)$. A linear combination with coefficients
$(s_0,\dots,s_m)$ equals
\begin{align*}
\sum_{i=0}^m s_i u(\vec{x}_i) =
u(\vec{x}_0)\prn{\sum_{i=0}^m s_i}
+\nabla u(\vec{x}_0)\cdot\prn{\sum_{i=1}^m s_i\vec{\bar x}_i}\;\; \\
+\nabla^2 u(\vec{x}_0):\prn{\frac{1}{2}\sum_{i=1}^m
s_i\prn{\vec{\bar x}_i\cdot\vec{\bar x}_i^T}}
+\prn{\sum_{i=1}^m s_ie_i}\;.
\end{align*}
This approximates the Laplacian,
i.e.~$\sum_{i=0}^m s_i u(\vec{x}_i) = \Delta u(\vec{x}_0)+O(r^3)$,
if exactness for constant, linear, and quadratic functions is satisfied
\begin{equation}
\sum_{i=0}^m s_i = 0 \quad , \quad
\sum_{i=1}^m\vec{\bar x}_i s_i = 0 \quad , \quad
\sum_{i=1}^m\prn{\vec{\bar x}_i\cdot\vec{\bar x}_i^T}s_i = 2I\;.
\label{seibold:eq:constraints_laplace}
\end{equation}
\begin{defn}
\label{seibold:def:consistency}
A stencil $(s_0,\dots,s_m)$ to a set of points
$(\vec{x}_0,\vec{x}_1,\dots,\vec{x}_m)\in B(\vec{x}_0,r)$ is called \emph{consistent}
(with the Laplace operator), if the constraints \eqref{seibold:eq:constraints_laplace}
are satisfied.
\end{defn}
The linear and quadratic constraints in \eqref{seibold:eq:constraints_laplace} can be
formulated as a linear system of equations
\begin{equation}
V\cdot\vec{s} = \vec{b}\;,
\label{seibold:eq:linear_system}
\end{equation}
where $V\in\mathbb{R}^{k\times m}$ is the Vandermonde matrix given
by $\vec{\bar x}_1,\dots,\vec{\bar x}_m$, and $\vec{s}\in\mathbb{R}^m$ is the
stencil vector.
The number of constraints is $k = \frac{d(d+3)}{2}$.
The constant constraint in \eqref{seibold:eq:constraints_laplace} yields
$s_0 = -\sum_{i=1}^m s_i$.
Neumann boundary points can be treated in a similar manner.
Approximating $\pd{u}{\vec{n}}(\vec{x}_0)$ by $\sum_{i=0}^m s_i u(\vec{x}_i)$
leads to the constraints
\begin{equation}
\sum_{i=0}^m s_i = 0 \quad , \quad
\sum_{i=1}^m\vec{\bar x}_i s_i = \vec{n}\;.
\label{seibold:eq:constraints_Neumann}
\end{equation}
For each point, a meshfree finite difference approximation consists of two steps:
First, define which points are its neighbors. Typically, more neighbors than
constraints are chosen. Second, select a stencil.
If \eqref{seibold:eq:constraints_laplace} is underdetermined, a minimization problem
is formulated to select a unique stencil.
The results in Sect.~\ref{seibold:sec:lp_solution_existence_connectivity} guarantee
the existence of a solution to the above linear
systems \eqref{seibold:eq:constraints_laplace}
and \eqref{seibold:eq:constraints_Neumann}.

\begin{defn}
\label{seibold:def:minimal_stencil}
A consistent stencil $(s_0,\dots,s_m)$ is called \emph{minimal}, if $m\le k$.
\end{defn}
Minimal stencils are beneficial for the sparsity of the system matrix, resulting in a
minimal memory consumption and a fast application of the matrix. In fact, the total
number of neighboring points is proportional to the effort of multiplying the matrix
with a vector. This promises a fast solution of system \eqref{seibold:eq:poisson_system}
with (semi-)iterative linear solvers, assumed the convergence properties are reasonable.
More importantly, having minimal stencils results in an optimally sparse matrix graph.
This property can be advantageous with respect to coarsening in algebraic
multigrid methods.

\begin{defn}
\label{seibold:def:positive_stencil}
A consistent stencil $(s_0,\dots,s_m)$ is called \emph{positive}, if \linebreak
$s_1,\dots,s_m\ge 0$. Due to \eqref{seibold:eq:constraints_laplace} and
\eqref{seibold:eq:constraints_Neumann}, this implies for the central point $s_0<0$.
\end{defn}
Having positive stencils implies that the system matrix
in \eqref{seibold:eq:poisson_system} is an L-matrix (Def.~\ref{seibold:def:L_matrix}),
which gives rise to an M-matrix structure (see Sect.~\ref{seibold:sec:m_matrices}).
The desirability of positive stencils has been pointed out
by Demkowicz, Karafiat and Liszka \cite{DemkowiczKarafiatLiszka1984}.
Classical approaches do in general not yield positive stencils.
An ``optimal star selection'' \cite{DuarteLiszkaTworzyako1996} makes positive stencils
likely, but they are not guaranteed.
In Sect.~\ref{seibold:sec:least_squares_vs_linear_minimization} we present a strategy
that approximates the Poisson equation \eqref{seibold:eq:poisson_equation} on a point
cloud by minimal positive stencils.
In Sect.~\ref{seibold:sec:lp_solution_existence_connectivity} conditions on a point
cloud are given which guarantee the existence of positive stencils.

\section{Non-Symmetric Matrices and Algebraic Multigrid}
\label{seibold:sec:non_symmetry_AMG}
Meshfree finite difference approaches, as described in Sect.~\ref{seibold:sec:fd_poisson},
approximate the Poisson equation by non-symmetric matrices.
Large linear systems with non-symmetric matrices have received considerable attention
in Markov chain modeling. For instance, the computation of a page ranking in internet
search engines leads to sparse non-symmetric matrices \cite{LangvilleMeyer2006}.
The arising matrices typically have an M-matrix structure.
In the approximation of partial differential equations, non-symmetric matrices
traditionally arise when convective terms are approximated.
As described in \cite{TrottenbergOosterleeSchuller2001}, classical AMG methods
frequently are observed to converge for such non-symmetric matrices, if the advection
does not dominate the problem. When approximating the Poisson equation, traditional
approaches lead to symmetric matrices. In contrast, meshfree finite difference approaches
approximate the Poisson equation by non-symmetric matrices. At first glance this
sounds surprising, since the Laplace operator itself is selfadjoint. However, it can
be proved and explained.

The formal argument for non-symmetry is as follows.
Consider two points $\vec{x}_i$ and $\vec{x}_j$, each being a neighbor of the other,
and a third point $\vec{x}_k$ which is a neighbor of $\vec{x}_i$ but not a neighbor of
$\vec{x}_j$. Since each stencil entry depends on \emph{all} its neighbors, the point
$\vec{x}_k$ influences the matrix entry $a_{ij}$, but not the matrix entry $a_{ji}$.

The non-symmetry of the matrix can also be understood in that a self-adjoint operator
is approximated on a discrete set of points that is unstructured, and thus does not
possess any local symmetry. Note that on unstructured geometries, classical finite
element approaches \cite{StrangFix1973} approximate the Laplacian by a non-symmetric
matrix as well. While in the finite element system $L\,\vec{u} = M\vec{f}$,
the stiffness matrix $L$ alone is always symmetric, the actual approximation matrix
$M^{-1}L$ is in general non-symmetric.

The non-symmetry is inherent in the finite difference approximation, and to the
author's knowledge, there is no straightforward way to symmetrize the linear system
\eqref{seibold:eq:poisson_system}. This has interesting implications for the choice
of efficient solvers for the arising linear systems. For instance, in the realm of
(preconditioned) Krylov subspace methods, the classical conjugate gradient method
has to be replaced by more costly solvers, such as GMRES \cite{SaadSchultz1986}
or BiCGstab \cite{VanDerVorst1992}.

Of fundamental interest are multigrid solvers for the arising
systems \eqref{seibold:eq:poisson_system}. In most practical applications with a
sufficient number of particles, the largest part of the computational time is spent
on the generation and solution of the linear systems in the pressure correction
(and possibly in an implicit treatment of viscosity, which leads to quite similar
system matrices). Hence it is of crucial interest to achieve the optimal
computational effort $O(n)$, where $n$ is the number of particles, that is promised
by multigrid methods.

In the context of particle methods, \emph{algebraic multigrid} (AMG) approaches are most
promising, since the application of geometric multigrid approaches on an unstructured,
non-connected cloud of points is very difficult. While in the symmetric matrix case,
various convergence results for AMG are well
known \cite{Brandt1986,Stueben2001,TrottenbergOosterleeSchuller2001},
theoretical results for the non-symmetric matrix are much fewer in number.
An analysis of AMG for non-symmetric matrices has been given by
Notay \cite{Notay2000,Notay2009}.
A complementary analysis for methods of \emph{algebraic multilevel iteration} (AMLI)
type \cite{AxelssonVassilevski1989,AxelssonVassilevski1990}
has been given by Mense and Nabben \cite{MenseNabben2008_1,MenseNabben2008_2}.
AMLI methods are based on a block incomplete factorization of the matrix, partitioned
in hierarchical form. Many AMG approaches can be interpreted as AMLI methods.

A common assumption in all the above theoretical results is that the matrix be an
M-matrix (see Sect.~\ref{seibold:sec:m_matrices}). The M-matrix structure implies
certain properties that positive definiteness would imply for symmetric matrices.
Since the negative Laplace operator in \eqref{seibold:eq:poisson_equation} is
positive definite, the M-matrix structure is natural for approximations of it.
In above references, proofs of convergence of two-grid methods for the M-matrix case
are given; some results can be extended to the full multigrid case. In particular,
it yields a discrete maximum principle, and guarantees the convergence of the
Gau{\ss}-Seidel iteration (see Sect.~\ref{seibold:subsec:mmatrix_benefits}).
In the above referenced theoretical results, the M-matrix structure is sufficient
for the convergence of AMG and AMLI methods. The importance of the M-matrix
structure for AMG is discussed in \cite{Notay2000}. The analysis of the two-grid
case is given in \cite{Notay2009}. A key step in the generalization of theoretical
AMG results to the non-symmetric matrix case is the suitable understanding of the
size of an eigenvalue as its modulus in the complex case.
In Sect.~\ref{seibold:subsec:mmatrix_multilevel}, recent theoretical results on
the convergence of AMLI methods in the M-matrix case are outlined.

Of course, the M-matrix property is not necessary for the convergence of multigrid
approaches. In fact, convergence is frequently observed even if the M-matrix structure
is violated. However, unless theoretical results for weaker assumptions are present,
one cannot be sure.
This would not be too problematic if a single Poisson equation had to be
solved. However, if the Poisson solves are just substeps in a larger computation
(as it is the case in the particle method), it is unsatisfactory to not have a
guarantee of convergence. In fact, one may observe a rapid AMG convergence in the
beginning of a long computation (when the particles are distributed nicely), but
convergence may deteriorate as the particles become less evenly distributed.
It is our hope that for meshfree finite difference methods, the M-matrix property
can be a key instrument in successful and efficient solution approaches for the
Poisson equation. In addition, it would be desirable to obtain an understanding of
the connection between the performance of AMG and the corresponding particle geometry.

\section{Properties of M-Matrices}
\label{seibold:sec:m_matrices}
In this section, we provide a brief overview of the definition of the M-matrix property,
a practical sufficient condition that relates to meshfree finite difference
approximations, and some well-known benefits of the M-matrix structure.
\begin{defn}
\label{seibold:def:L_matrix}
A square matrix $A=(a_{ij})_{ij}\in\mathbb{R}^{n\times n}$ is called \emph{Z-matrix},
if $a_{ij}\le 0 \ \forall i\neq j$.
A Z-matrix is called \emph{L-matrix}, if $a_{ii}>0 \ \forall i$.
\end{defn}
We write $A\ge 0$ if $a_{ij}\ge 0 \ \forall i,j$. The same notation applies to vectors.
\begin{defn}
A regular matrix $A$ is called \emph{inverse positive}, if $A^{-1}\ge 0$.
\end{defn}
\begin{defn}
A Z-matrix is called \emph{M-matrix}, if it is inverse positive.
\end{defn}
We use the M-matrix property, since it yields a sufficient condition for inverse
positivity.

\subsection{A Sufficient Condition for the M-Matrix Structure}
\label{seibold:subsec:mmatrix_suff_cond}
Conditions that imply the M-matrix property are required, since the inverse matrix is
typically not directly available.
Let the unknowns be labeled by an index set $I$. We consider square matrices
$A\in\mathbb{R}^{I\times I}$.
\begin{defn}
The \emph{graph} $G(A)$ of a matrix $A$ is defined by
$G(A) = \{(i,j)\in I\times I : a_{ij}\neq 0\}$.
The index $i\in I$ is called \emph{connected} to $j\in I$, if a chain
$i=i_0,i_1,\dots,i_{k-1},i_k=j \ \in I$ exists, such that
$(i_{\nu-1},i_\nu)\in G(A) \ \forall \nu=1,\dots,k$.
\end{defn}
\begin{rem}
For a finite difference matrix, each index $i\in I$ corresponds to a point $\vec{x}_i$.
The index $i\in I$ being connected to $j\in I$ means that the point $\vec{x}_i$
connects (indirectly) to the point $\vec{x}_j$ via stencil entries.
\end{rem}
\begin{defn}
A finite difference matrix is called \emph{essentially irreducible} if every point
is connected to a Dirichlet boundary point.
\end{defn}
\begin{rem}
A finite difference matrix that is not essentially irreducible is singular,
since the points that are not connected to a Dirichlet point form a singular submatrix.
\end{rem}
\begin{defn}
A matrix $A\in\mathbb{R}^{I\times I}$ is called \emph{essentially diagonally dominant}
if it is weakly diagonally dominant ($\forall i\in I: |a_{ii}|\ge\sum_{k\neq i}|a_{ik}|$),
and every point $i\in I$ is connected to a point $j\in I$ which satisfies the strict
diagonal dominance relation $|a_{jj}|>\sum_{k\neq j}|a_{jk}|$.
\end{defn}
\begin{thm}
\label{seibold:thm:ess_irred_is_ess_diagdom}
An L-matrix arising as a finite difference discretization of
\eqref{seibold:eq:poisson_equation} is essentially diagonally dominant if it is
essentially irreducible.
\end{thm}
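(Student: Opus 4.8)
The plan is to verify the two defining ingredients of essential diagonal dominance in turn: first that every row of $A$ is weakly diagonally dominant, then that every index is connected to a strictly diagonally dominant row. Both facts are read off directly from the structure a finite difference discretization of \eqref{seibold:eq:poisson_equation} imposes on the rows of $A$, combined with the L-matrix sign pattern.

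First I would partition the index set according to the type of point: interior, Neumann boundary, and Dirichlet boundary. For an interior point $\vec{x}_i$, the row of $A$ is a consistent stencil, so the constant constraint in \eqref{seibold:eq:constraints_laplace} (equivalently $s_0 = -\sum_{\ell\ge 1}s_\ell$) forces the entries of row $i$ to sum to zero, $\sum_k a_{ik}=0$. Since $A$ is a Z-matrix, $a_{ik}\le 0$ for $k\neq i$, hence $a_{ii} = -\sum_{k\neq i}a_{ik} = \sum_{k\neq i}\abs{a_{ik}}$; row $i$ is thus weakly diagonally dominant with equality. The same computation applies to a Neumann boundary point, since the first constraint in \eqref{seibold:eq:constraints_Neumann} again makes the row sum vanish, the normal-vector constraint being irrelevant to the row sum. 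Finally, a Dirichlet boundary point contributes a row that directly prescribes the value at $\vec{x}_i$, so that $a_{ii}>0$ and $a_{ik}=0$ for $k\neq i$, whence $\abs{a_{ii}} > 0 = \sum_{k\neq i}\abs{a_{ik}}$; this row is in fact \emph{strictly} diagonally dominant. In all three cases $\abs{a_{ii}}\ge\sum_{k\neq i}\abs{a_{ik}}$, so $A$ is weakly diagonally dominant.

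For the second ingredient, observe from the previous step that every Dirichlet boundary point satisfies the strict diagonal dominance relation. By hypothesis $A$ is essentially irreducible, i.e.\ every index is connected in $G(A)$ to a Dirichlet boundary point; hence every index $i$ is connected to some index $j$ with $\abs{a_{jj}} > \sum_{k\neq j}\abs{a_{jk}}$, which is exactly the remaining requirement in the definition of essential diagonal dominance. Combining the two steps proves the theorem.

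The argument is short, and there is no substantial obstacle; the only care needed is bookkeeping, namely translating the ``constant'' consistency condition into a vanishing row sum for both interior and Neumann rows (so that those rows are weakly but not necessarily strictly dominant) and identifying the Dirichlet rows as the sole source of strict dominance. Conceptually, the statement just says that the maximum-principle-type structure of the scheme in the interior is correctly ``closed off'' by the Dirichlet data, and that this closing-off is precisely what the irreducibility hypothesis supplies.
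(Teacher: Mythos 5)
Your proof is correct and follows the same route as the paper: the constant consistency constraint (zero row sum) together with the L-matrix sign pattern gives weak diagonal dominance at interior and Neumann rows, Dirichlet rows are strictly dominant, and essential irreducibility supplies the connection of every index to such a strictly dominant row. Your write-up merely spells out the bookkeeping that the paper's two-sentence proof leaves implicit.
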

\begin{proof}
For an L-matrix the constant relation in \eqref{seibold:eq:constraints_laplace}
implies the weak diagonal dominance relation for every interior and Neumann point.
Each row corresponding to a Dirichlet point satisfies the strict diagonal dominance
relation.
\end{proof}
\begin{thm}
\label{seibold:thm:Lmatrix_is_Mmatrix}
An essentially diagonally dominant L-matrix is an M-matrix.
\end{thm}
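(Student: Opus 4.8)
The plan is to establish the equivalent \emph{monotonicity} property — that $A\vec{x}\ge 0$ implies $\vec{x}\ge 0$ — from which both the regularity of $A$ and $A^{-1}\ge 0$ follow at once. Indeed, applying the implication to $\vec{x}$ and to $-\vec{x}$ gives $A\vec{x}=0\Rightarrow\vec{x}=0$, so $A$ is regular; and then applying it to $\vec{x}=A^{-1}\vec{e}_j$, where $\vec{e}_j$ is the $j$-th unit vector (so that $A\vec{x}=\vec{e}_j\ge 0$), shows that every column of $A^{-1}$ is nonnegative. Since $A$ is a Z-matrix by hypothesis, this is precisely the M-matrix property.

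To prove the implication I would argue by a discrete maximum principle. Suppose $A\vec{x}\ge 0$ but $\vec{x}\not\ge 0$, set $\mu:=\min_i x_i<0$, and let $S=\{i\in I:x_i=\mu\}$ be the nonempty set of minimizers. For $i\in S$, using $a_{ii}>0$, the Z-property $a_{ik}\le 0$ for $k\neq i$, and $x_k\ge x_i$ for all $k$, I obtain
\[
0\le (A\vec{x})_i=a_{ii}x_i+\sum_{k\neq i}a_{ik}x_k\le a_{ii}x_i+\sum_{k\neq i}a_{ik}x_i
 =x_i\Bigl(a_{ii}-\sum_{k\neq i}\abs{a_{ik}}\Bigr).
\]
Since $x_i=\mu<0$, the factor $a_{ii}-\sum_{k\neq i}\abs{a_{ik}}$ must be $\le 0$, so row $i$ is \emph{not} strictly diagonally dominant; and since every inequality above is forced to be an equality, $\sum_{k\neq i}a_{ik}(x_k-x_i)=0$ with all summands $\le 0$, whence $x_k=x_i$, i.e.\ $k\in S$, for every $k$ with $(i,k)\in G(A)$.

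The last point says $S$ is closed under passing to graph neighbours, so along any chain $i=i_0,i_1,\dots,i_p=j$ with $(i_{\nu-1},i_\nu)\in G(A)$ one gets $i_0\in S\Rightarrow i_1\in S\Rightarrow\cdots\Rightarrow j\in S$, hence $j$ is not strictly diagonally dominant. But essential diagonal dominance provides, for the index $i\in S$, exactly such a chain ending at a strictly diagonally dominant index $j$ — a contradiction. Hence $\vec{x}\ge 0$, which would complete the argument. I do not anticipate a real obstacle here; the only care needed is to see that weak diagonal dominance together with the Z-structure rigidify the inequality at a minimizer into ``all graph neighbours are again minimizers,'' after which the chain hypothesis in essential diagonal dominance propagates that rigidity to a strictly dominant row to reach the contradiction. (An alternative route is to write $A=D-N$ with $D=\mathrm{diag}(a_{ii})>0$ and $N\ge 0$, observe $\norm{D^{-1}N}_\infty\le 1$ and hence $\rho(D^{-1}N)\le 1$, upgrade this to $\rho(D^{-1}N)<1$ using regularity of $A$ and the Perron--Frobenius fact that $\rho(D^{-1}N)$ is itself an eigenvalue, and conclude $A^{-1}=\sum_{k\ge 0}(D^{-1}N)^kD^{-1}\ge 0$.)
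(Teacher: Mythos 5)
Your argument is correct, and it is genuinely different from what the paper does: the paper gives no self-contained proof at all, deferring entirely to Hackbusch (cited at p.~153), whereas you supply a complete elementary proof. Your main route—establishing monotonicity ($A\vec{x}\ge 0\Rightarrow\vec{x}\ge 0$) via a discrete maximum principle in which a negative minimizer forces its row to lose strict dominance and forces all its graph neighbours to be minimizers as well, so that the chain in the definition of essential diagonal dominance propagates the rigidity to a strictly dominant row—is sound: the key step $a_{ik}x_k\le a_{ik}x_i$ uses exactly the Z-sign structure, weak dominance turns the resulting chain of inequalities into equalities, and the connectivity hypothesis delivers the contradiction; regularity and $A^{-1}\ge 0$ then follow from monotonicity as you say. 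This buys a proof from first principles using only the definitions stated in the paper, which is arguably more in the spirit of the section than the bare citation. Your parenthetical alternative (splitting $A=D-N$, $\norm{D^{-1}N}_\infty\le 1$, Perron--Frobenius, Neumann series) is essentially the textbook route behind the cited result, but as sketched it is circular: you invoke ``regularity of $A$'' to upgrade $\rho(D^{-1}N)\le 1$ to $\rho(D^{-1}N)<1$, yet regularity is part of what must be proved and does not follow from weak diagonal dominance alone (a weakly dominant L-matrix can be singular); one would have to feed essential diagonal dominance into that step separately, e.g.\ via an irreducibility-type argument on the eigenvector of $D^{-1}N$. Since that alternative is only an aside and your primary argument is complete, this does not affect the validity of the proof.
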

\begin{proof}
The proof is given in \cite[p.~153]{Hackbusch1994}.
\end{proof}
If problem \eqref{seibold:eq:poisson_equation} can be discretized by positive stencils
and every point is connected to a Dirichlet point, then the resulting matrix
is an M-matrix.

\subsection{Some Benefits of the M-Matrix Structure}
\label{seibold:subsec:mmatrix_benefits}
The Poisson equation satisfies maximum principles. For instance, consider
\eqref{seibold:eq:poisson_equation} with Dirichlet boundary conditions only.
If $f\le 0$ and $g\le 0$, then the solution satisfies $u\le 0$ \cite{Evans1998}.
A discretization by an M-matrix mimics this property in a discrete maximum
principle.
\begin{thm}
\label{seibold:thm:mmatrix_discrete_max_principle}
Let $A$ be an M-matrix. Then $A\vec{x}\le 0$ implies $\vec{x}\le 0$.
Conversely, a Z-matrix satisfying $A\vec{x}\le 0\Rightarrow\vec{x}\le 0$ is an
M-matrix.
\end{thm}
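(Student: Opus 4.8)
The plan is to prove the two implications separately, each time working directly from the definition of an M-matrix as a regular, inverse-positive Z-matrix. For the forward implication I would assume $A$ is an M-matrix, so that $A$ is regular and $A^{-1}\ge 0$. Given any $\vec{x}$ with $\vec{y}:=A\vec{x}\le 0$, I would simply write $\vec{x}=A^{-1}\vec{y}$ and note that each component of $\vec{x}$ is a sum of products of a nonnegative entry of $A^{-1}$ with a nonpositive entry of $\vec{y}$, hence is nonpositive; thus $\vec{x}\le 0$. No structure of $A$ beyond inverse positivity enters here.

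For the converse I would first establish that $A$ is regular, since one is not entitled to speak of $A^{-1}$ until that is known. Suppose $A\vec{x}=0$. Then $A\vec{x}\le 0$, so the hypothesis gives $\vec{x}\le 0$; applying the hypothesis also to $-\vec{x}$, for which $A(-\vec{x})=0\le 0$, gives $-\vec{x}\le 0$, i.e.~$\vec{x}\ge 0$. Hence $\vec{x}=0$, so the square matrix $A$ is injective and therefore invertible.

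It then remains to verify $A^{-1}\ge 0$. For each index $j$ I would let $\vec{e}_j$ denote the $j$-th unit vector and consider the $j$-th column $\vec{c}_j=A^{-1}\vec{e}_j$ of $A^{-1}$. Since $A(-\vec{c}_j)=-\vec{e}_j\le 0$, the hypothesis yields $-\vec{c}_j\le 0$, i.e.~$\vec{c}_j\ge 0$; as this holds for every $j$, every column of $A^{-1}$ is nonnegative, so $A^{-1}\ge 0$. Because $A$ is assumed to be a Z-matrix and is now known to be inverse positive, it is an M-matrix by definition. Notably, the Z-matrix hypothesis is used only at this final step to upgrade ``inverse positive'' to ``M-matrix''; the maximum-principle property alone already forces inverse positivity.

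I do not expect a genuine obstacle in this proof. The only point that requires care is the ordering of the converse argument: regularity must be proved first, and only afterwards may the maximum-principle hypothesis be applied to the (now well-defined) columns of $A^{-1}$ and their negatives to deduce nonnegativity of $A^{-1}$.
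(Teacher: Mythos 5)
Your proof is correct. The forward implication is argued exactly as in the paper: write $\vec{x}=A^{-1}\vec{y}$ with $\vec{y}=A\vec{x}$ and combine the componentwise inequalities $A^{-1}\ge 0$ and $\vec{y}\le 0$. For the converse, the paper gives no argument of its own but merely cites a reference, whereas you supply the standard self-contained monotonicity argument: applying the hypothesis to $\pm\vec{x}$ when $A\vec{x}=0$ shows the kernel is trivial, hence $A$ is regular, and then applying it to the negatives of the columns $A^{-1}\vec{e}_j$ gives $A^{-1}\ge 0$, so the Z-matrix $A$ is an M-matrix by the paper's definition. Your insistence on establishing regularity before invoking $A^{-1}$ is exactly the right ordering, and your closing remark is also accurate: the Z-matrix hypothesis is needed only to label the conclusion an M-matrix, since the maximum-principle property by itself already forces inverse positivity. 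In short, the first half matches the paper's proof verbatim in spirit, and the second half fills in, correctly, the step the paper outsources to the literature.
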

\begin{proof}
A is an M-matrix, thus $A^{-1}\ge 0$ by definition. Let $\vec{y} = A\vec{x}$.
Then $\vec{x} = A^{-1}\vec{y}$. The component-wise inequalities $A^{-1}\ge 0$
and $\vec{y}\le 0$ imply $\vec{x}\le 0$. The reverse statement is proved in
\cite[p.~29]{QuarteroniSaccoSaleri2000}.
\end{proof}
\begin{thm}
If $A$ is an M-matrix, and $D$ its diagonal part, then $\rho(I-D^{-1}A)<1$,
thus the Jacobi and the Gau{\ss}-Seidel iteration converge.
\end{thm}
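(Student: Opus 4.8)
The plan is to recognise both iterations as \emph{regular splittings} of $A$ and then invoke (or reprove) the classical fact that a regular splitting of an inverse-positive matrix is convergent. Recall that $A=M-N$ is a regular splitting if $M^{-1}\ge 0$ and $N\ge 0$, in which case the associated iteration matrix is $M^{-1}N$ and the stationary iteration converges iff $\rho(M^{-1}N)<1$.

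First I would check that $D$ makes sense: since $A$ is a Z-matrix with $A^{-1}\ge 0$, reading off the $(i,i)$ entry of $AA^{-1}=I$ gives $a_{ii}(A^{-1})_{ii}=1-\sum_{k\ne i}a_{ik}(A^{-1})_{ki}\ge 1$, because each summand is a product of a nonpositive and a nonnegative number; hence $a_{ii}>0$ and $D^{-1}\ge 0$ exists. For Jacobi, set $N=D-A$: its diagonal vanishes and its off-diagonal entries are $-a_{ij}\ge 0$, so $N\ge 0$ and $A=D-N$ is a regular splitting with iteration matrix exactly $I-D^{-1}A$. For Gau{\ss}-Seidel, write $A=D-\tilde L-\tilde U$, with $\tilde L$, $\tilde U$ the strictly lower and strictly upper triangular parts of $D-A$ (entrywise nonnegative, since $A$ is a Z-matrix), and take $M=D-\tilde L$, $N=\tilde U$. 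Then $M=D(I-D^{-1}\tilde L)$ with $D^{-1}\tilde L$ strictly lower triangular, so $M^{-1}=\sum_{k=0}^{n-1}(D^{-1}\tilde L)^k D^{-1}\ge 0$ is a finite sum of nonnegative matrices; hence $A=M-N$ is again a regular splitting, whose iteration matrix is the Gau{\ss}-Seidel matrix $M^{-1}N$.

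It then remains to prove the general statement: if $A^{-1}\ge 0$ and $A=M-N$ is a regular splitting, then $\rho(M^{-1}N)<1$. Put $G=M^{-1}N\ge 0$ and let $x\ge 0$, $x\ne 0$, be a Perron--Frobenius eigenvector for the spectral radius, $Gx=\rho x$ with $\rho=\rho(G)\ge 0$. Since $I-G=M^{-1}A$ is nonsingular, $\rho\ne 1$, and from $A=M(I-G)$ one gets $A^{-1}N=(I-G)^{-1}G$, hence
\[
A^{-1}Nx=(I-G)^{-1}Gx=\frac{\rho}{1-\rho}\,x .
\]
The left-hand side is nonnegative because $A^{-1}\ge 0$, $N\ge 0$, $x\ge 0$; as $x$ has a strictly positive component, $\tfrac{\rho}{1-\rho}\ge 0$, which together with $\rho\ge 0$ and $\rho\ne 1$ forces $1-\rho>0$, i.e.\ $\rho<1$. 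Applying this to the two splittings above yields $\rho(I-D^{-1}A)<1$ and $\rho((D-\tilde L)^{-1}\tilde U)<1$, so both iterations converge.

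The only genuinely delicate ingredient is the Perron--Frobenius step, namely the existence of a nonnegative eigenvector belonging to $\rho(G)$ for the possibly \emph{reducible} nonnegative matrix $G$; I would quote this from a standard reference (Varga, or Berman--Plemmons), together with the regular-splitting theorem itself, which is classical and also found in \cite{Hackbusch1994}. Everything else is elementary bookkeeping, so in a final write-up the cleanest route is simply to verify the two regular-splitting conditions $M^{-1}\ge 0$, $N\ge 0$ for Jacobi and Gau{\ss}-Seidel and then cite the convergence theorem.
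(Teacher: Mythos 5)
Your proof is correct, but it takes a genuinely different route from the paper. The paper simply cites the literature: convergence of the Jacobi iteration for M-matrices is taken from Hackbusch, and Gau{\ss}-Seidel convergence is then deduced from the Stein--Rosenberg theorem, which for L-matrices ties the convergence of Gau{\ss}-Seidel to that of Jacobi (and as a bonus gives the comparison that Gau{\ss}-Seidel is at least as fast). You instead treat both iterations symmetrically as regular splittings $A=M-N$ with $M^{-1}\ge 0$, $N\ge 0$, and essentially reprove the classical regular-splitting theorem of Varga: your Perron--Frobenius argument, $A^{-1}Nx=\tfrac{\rho}{1-\rho}x\ge 0$ forcing $\rho<1$, is sound, as is your preliminary verification that $a_{ii}>0$ follows from the Z-matrix and inverse-positivity assumptions (a detail the paper's definition of M-matrix does not state explicitly and which is needed for $D^{-1}\ge 0$), and your nilpotency argument for $(D-\tilde L)^{-1}\ge 0$. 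What your approach buys is a nearly self-contained, uniform proof whose only external input is Perron--Frobenius for possibly reducible nonnegative matrices (spectral radius attained by a nonnegative eigenvector), whereas the paper's two-line proof buys brevity plus the quantitative Stein--Rosenberg comparison between the two iterations, which your argument does not recover.
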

\begin{proof}
The convergence of the Jacobi iteration is given in \cite{Hackbusch1994}.
The Gau{\ss}-Seidel convergence follows from the
Stein-Rosenberg-Theorem \cite{Varga2000}.
\end{proof}
Further classical benefits of an M-matrix structure with respect to linear solvers
can be found in \cite{Varga2000}.

\subsection{Relevance of the M-Matrix Structure for Multilevel Methods}
\label{seibold:subsec:mmatrix_multilevel}
Recent theoretical convergence results for AMG in the non-symmetric M-matrix case have
been given in \cite{Notay2000,Notay2009}, and for AMLI type approaches in
\cite{MenseNabben2008_1,MenseNabben2008_2}. Here, we outline the key results for the
latter type of approaches. The M-matrix structure is a sufficient condition for the
convergence of additive Schwarz (AMLI) methods for non-symmetric matrices.
Consider the system matrix be partitioned into blocks
\begin{equation*}
A = \begin{bmatrix} A_{FF} & A_{FC} \\ A_{CF} & A_{CC} \end{bmatrix}\;,
\end{equation*}
where $F$ denotes the fine grid unknowns, and $C$ the coarse grid unknowns.
If $A_{FF}$ is regular, then $A$ can be factorized as
\begin{equation}
A = \begin{bmatrix} I_F & 0 \\ A_{CF}A_{FF}^{-1} & I_C \end{bmatrix}\cdot
\begin{bmatrix} A_{FF} & 0 \\ 0 & S \end{bmatrix}\cdot
\begin{bmatrix} I_F & A_{FF}^{-1}A_{FC} \\ 0 & I_C \end{bmatrix}\;,
\label{seibold:eq:Schwarz_factorization}
\end{equation}
where
\begin{equation}
S = A_{CC}-A_{CF}A_{FF}^{-1}A_{FC}
\label{eq:Schur_complement}
\end{equation}
is the Schur complement.
An approximation $M\approx A$ is obtained by multiplying \eqref{seibold:eq:Schwarz_factorization}
with the approximations $\tilde{A}_{FF}\approx A_{FF}$ and $\tilde{S}\approx S$.
A simple example for $\tilde{S}$ is to replace $A_{FF}$ by $\tilde{A}_{FF}$
in \eqref{eq:Schur_complement}.
As outlined in \cite{MenseNabben2008_2}, these block Schwarz factorization
approaches are closely related to two-grid AMG methods.
The AMLI method can be described as the stationary iteration with the iteration
matrix
\begin{equation*}
T = I-M^{-1}A\;.
\end{equation*}
\begin{defn}
A splitting $(M,N)$ of $A = M-N$ with $M$ regular is called
\emph{weak regular of the first type}, if $M^{-1}\ge 0$ and $M^{-1}N\ge 0$.
\end{defn}
The convergence of the AMLI method is ensured by the
\begin{thm}
\label{seibold:thm:AMLI_convergence}
If the splittings
$\prn{\tilde{A}_{FF},\tilde{A}_{FF}-A_{FF}}$ and $\prn{\tilde{S},\tilde{S}-S}$
are weak regular of the first type,
then $T\ge 0$, and $\rho(T)<1$.
\end{thm}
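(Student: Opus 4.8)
The plan is to read off $T$ as the error-propagation operator of a single splitting and then appeal to classical splitting theory. Setting $N := M - A$ we have $A = M - N$ and
\[
T \;=\; I - M^{-1}A \;=\; I - M^{-1}(M - N) \;=\; M^{-1}N ,
\]
so it suffices to show that $(M,N)$ is a weak regular splitting of $A$ of the first type. Indeed, that gives $T = M^{-1}N \ge 0$ immediately, and, since $A$ is an M-matrix and hence $A^{-1}\ge 0$, the classical convergence criterion for weak regular splittings then yields $\rho(T)=\rho(M^{-1}N)<1$. Everything therefore reduces to the two nonnegativity statements $M^{-1}\ge 0$ and $M^{-1}N\ge 0$.

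For $M^{-1}\ge 0$ I would work with the factored form. By construction $M = L\,\tilde D\,U$, where $L$, $\tilde D$, $U$ are the three factors of \eqref{seibold:eq:Schwarz_factorization} with $A_{FF}$ replaced by $\tilde A_{FF}$ and $S$ by $\tilde S$; hence $M^{-1}=U^{-1}\tilde D^{-1}L^{-1}$. The middle factor $\tilde D^{-1}=\mathrm{diag}(\tilde A_{FF}^{-1},\tilde S^{-1})$ is nonnegative by the first-type hypothesis on the two splittings. The factors $L^{-1}$ and $U^{-1}$ are unit block-triangular with the single off-diagonal blocks $-A_{CF}\tilde A_{FF}^{-1}$ and $-\tilde A_{FF}^{-1}A_{FC}$, both nonnegative because $A$ is a Z-matrix (so $A_{CF}\le 0$, $A_{FC}\le 0$) and $\tilde A_{FF}^{-1}\ge 0$. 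A product of nonnegative matrices is nonnegative, so $M^{-1}\ge 0$.

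For $M^{-1}N\ge 0$ I would first observe that $M$ and $A$ share the same off-diagonal blocks, so $N = M-A$ is block diagonal with blocks $N_{FF}=\tilde A_{FF}-A_{FF}$ and $N_{CC}=\tilde S-\bigl(A_{CC}-A_{CF}\tilde A_{FF}^{-1}A_{FC}\bigr)$. Writing $M^{-1}N = U^{-1}\bigl(\tilde D^{-1}L^{-1}N\bigr)$, a short block computation shows that $\tilde D^{-1}L^{-1}N$ has blocks $\tilde A_{FF}^{-1}N_{FF}=I-\tilde A_{FF}^{-1}A_{FF}$ (top-left), $-\tilde S^{-1}A_{CF}\bigl(I-\tilde A_{FF}^{-1}A_{FF}\bigr)$ (bottom-left), $\tilde S^{-1}N_{CC}$ (bottom-right), and $0$ (top-right). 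The top-left block equals $\tilde A_{FF}^{-1}(\tilde A_{FF}-A_{FF})\ge 0$ by the first hypothesis, and the bottom-left block is then a product of the nonnegative factors $\tilde S^{-1}$, $-A_{CF}$, and that block. Hence $M^{-1}N = U^{-1}(\,\cdot\,)\ge 0$ follows as soon as the Schur-complement block $\tilde S^{-1}N_{CC}$ is shown to be nonnegative.

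This last point is where I expect the main obstacle to lie. The trouble is that $N_{CC}$ compares $\tilde S$ with the \emph{approximate} Schur complement $A_{CC}-A_{CF}\tilde A_{FF}^{-1}A_{FC}$, whereas the second hypothesis is phrased with the exact Schur complement $S$ of \eqref{eq:Schur_complement}. If $\tilde S$ is the simple choice $A_{CC}-A_{CF}\tilde A_{FF}^{-1}A_{FC}$, this block vanishes identically and nothing is left to prove; in general I would substitute $A_{CC}=S+A_{CF}A_{FF}^{-1}A_{FC}$ and control $\tilde A_{FF}^{-1}$ against $A_{FF}^{-1}$. Here the M-matrix structure of $A$ is indispensable: a principal submatrix of an M-matrix is an M-matrix, so $A_{FF}^{-1}\ge 0$, and this together with $I-\tilde A_{FF}^{-1}A_{FF}\ge 0$ and $\rho(I-\tilde A_{FF}^{-1}A_{FF})<1$ (itself from splitting theory applied to the $F$-block) gives $A_{FF}^{-1}-\tilde A_{FF}^{-1}\ge 0$; since moreover the Schur complement of an M-matrix is an M-matrix, $\tilde S^{-1}(\tilde S-S)\ge 0$ from the second hypothesis is exactly what must be combined with the Z-matrix sign pattern of $A_{CF}$, $A_{FC}$ to conclude $\tilde S^{-1}N_{CC}\ge 0$. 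Granting this, $M^{-1}N\ge 0$, so $(M,N)$ is weak regular of the first type and the conclusions $T\ge 0$ and $\rho(T)<1$ follow from the reduction in the first paragraph.
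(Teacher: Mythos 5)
First, note that the paper does not prove this theorem: it is quoted from Mense and Nabben \cite{MenseNabben2008_2}, so there is no in-paper proof to compare against, and your attempt has to stand on its own. Your overall reduction is the right one (show that the induced splitting $(M,\,M-A)$ is weak regular of the first type and invoke the classical convergence theorem for weak regular splittings of monotone matrices, using the section's standing assumption that $A$ is an M-matrix, which your argument genuinely needs both for $A^{-1}\ge 0$ and for $A_{FC},A_{CF}\le 0$), and your computations of $M^{-1}\ge 0$ and of three of the four blocks of $M^{-1}N$ are correct. The gap is exactly where you located it, and your sketched repair runs the wrong way. From $\tilde A_{FF}^{-1}(\tilde A_{FF}-A_{FF})\ge 0$ and convergence of that splitting one gets $A_{FF}^{-1}\ge\tilde A_{FF}^{-1}$ (Neumann series), hence with $A_{CF},A_{FC}\le 0$ the approximate Schur complement $S_{\tilde A}:=A_{CC}-A_{CF}\tilde A_{FF}^{-1}A_{FC}$ satisfies $S_{\tilde A}\ge S$, so that $N_{CC}=\tilde S-S_{\tilde A}=(\tilde S-S)-A_{CF}\prn{A_{FF}^{-1}-\tilde A_{FF}^{-1}}A_{FC}\le\tilde S-S$. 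The hypothesis $\tilde S^{-1}(\tilde S-S)\ge 0$ therefore only bounds $\tilde S^{-1}N_{CC}$ from \emph{above}; it cannot yield the needed lower bound $\tilde S^{-1}N_{CC}\ge 0$.

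Moreover, this gap cannot be closed by any argument, because the conclusion is false under the hypotheses read literally with the exact Schur complement. Take $F$ and $C$ singletons, $A=\begin{pmatrix}1 & -0.9\\ -0.9 & 1\end{pmatrix}$ (an M-matrix, $S=0.19$), $\tilde A_{FF}=100$, $\tilde S=S=0.19$: both splittings are weak regular of the first type, yet $M=\begin{pmatrix}100 & -0.9\\ -0.9 & 0.1981\end{pmatrix}$, $N=M-A=\mathrm{diag}(99,\,-0.8019)$, so the $CC$ entry of $T$ is $\tilde S^{-1}(\tilde S-S_{\tilde A})\approx -4.2<0$ and $\det T=\det N/\det M\approx -4.18$, whence $T\not\ge 0$ and $\rho(T)>1$. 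The statement (and your block computation, which then closes immediately) becomes correct once the second hypothesis is taken with respect to the approximate Schur complement, i.e.\ $\tilde S^{-1}\ge 0$ and $\tilde S^{-1}(\tilde S-S_{\tilde A})\ge 0$ — a condition that is in fact necessary for $T\ge 0$, since it is precisely the $CC$ block of $T$ — and this is how the convergence result should be understood (as in the cited work, where for Jacobi, Gauss--Seidel, or ILU choices of $\tilde A_{FF}$ and the corresponding $\tilde S$ these conditions are verified when $A$ is an M-matrix). So: right strategy, correct linear algebra up to the Schur block, but the final step as proposed fails for a sign reason, and the theorem's hypothesis must be reinterpreted (or strengthened) for any proof to exist.
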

It is shown in \cite{MenseNabben2008_2} that if $A$ is an M-matrix, then the assumptions
in Thm.~\ref{seibold:thm:AMLI_convergence} are satisfied for approximations $\tilde{A}_{FF}$
given by the Jacobi and the Gauss-Seidel methods and the incomplete LU factorization.
However, the convergence proof fails in general, if $A$ does not have an M-matrix
structure.
In \cite{MenseNabben2008_2}, further methods have been presented (MAMLI, SMAMLI), that
are based on other types of Schwarz factorizations. Under similar types of assumptions
on the splittings, one has the convergence result
$\rho(T_{SMAMLI})\le\rho(T_{MAMLI})\le\rho(T_{AMLI})$.

\section{Least Squares vs.~Linear Minimization Approach}
\label{seibold:sec:least_squares_vs_linear_minimization}
Classical approaches for meshfree derivative approximation are moving least squares
methods, based on scattered data interpolation \cite{LancasterSalkauskas1981},
and local approximation methods, based on generalized finite difference
methods \cite{LiszkaOrkisz1980}. Their application to meshfree settings has
been analyzed in \cite{DuarteLiszkaTworzyako1996,Levin1998}.

Around a central point $\vec{x}_0$, points inside a radius $r$ are considered.
A distance weight function $w(\delta)$ is defined, which is small for $\delta>r$.
For instance, one can consider $w(\delta)=\delta^{-\alpha}$, where $\alpha\ge 1$.
Each point in the neighborhood $\vec{x}_i\in B(\vec{x}_0,r)$ is assigned a weight
$w_i = w\prn{\|\vec{x}_i-\vec{x}_0\|_2}$. A unique stencil is defined via a quadratic
minimization problem
\begin{equation}
\min\sum_{i=1}^n\frac{s_i^2}{w_i}, \ \mathrm{s.t.} \
V\cdot\vec{s} = \vec{b}\;.
\label{seibold:eq:quadratic_minimization}
\end{equation}
Using $W=\mathrm{diag}(w_i)$, the solution of \eqref{seibold:eq:quadratic_minimization}
is given by
\begin{equation}
\vec{s} = WV^T(VWV^T)^{-1}\cdot\vec{b}\;.
\label{seibold:eq:stencil_lsq}
\end{equation}
Least squares approaches do not yield minimal stencils, unless exactly $k$ neighbors
are considered. In general, they also do not yield positive stencils, as the following
example shows.

\begin{ex}
\label{seibold:ex:qm_nonpos_stencil}
Consider $\vec{x}_0=(0,0)$ and 6 neighbors on the unit
circle $\vec{x}_i=(\cos(\frac{\pi}{2}\varphi_i),\sin(\frac{\pi}{2}\varphi_i))$,
where $(\varphi_1,\dots,\varphi_6)=(0,1,2,3,0.1,0.2)$.
Since all neighbors have the same distance from $\vec{x}_0$, the distance weight
function does not play a role. Formula \eqref{seibold:eq:stencil_lsq} yields the
non-positive least squares stencil $\vec{s}=(0.846,1.005,0.998,1.003,0.312,-0.164)$.
However, the configuration admits a positive stencil, namely $\vec{s}=(1,1,1,1,0,0)$.
\end{ex}
As outlined in Sects.~\ref{seibold:sec:non_symmetry_AMG}
and \ref{seibold:subsec:mmatrix_multilevel}, we can be sure that multigrid
approaches converge, if an M-matrix structure is generated. Hence, as a new approach,
we enforce the positivity of stencils, i.e.~we search for solutions in the polyhedron
\begin{equation}
P = \{\vec{s}\in\mathbb{R}^m : V\cdot\vec{s} = \vec{b} , \
\vec{s}\ge 0\}\;.
\label{seibold:eq:polyhedron}
\end{equation}
This is the feasibility problem of linear optimization \cite{Vanderbei2001}.
In Sect.~\ref{seibold:sec:lp_solution_existence_connectivity} we provide conditions for
the point geometry that guarantee the existence of solutions.
If $P$ is non-void and not degenerate, there are infinitely many feasible stencils.
To single out a unique stencil we now formulate a \emph{linear} (or $\ell^1$)
minimization problem
\begin{equation}
\min \sum_{i=1}^m \frac{s_i}{w_i}, \ \mathrm{s.t.} \
V\cdot\vec{s}=\vec{b}, \ \vec{s}\ge 0\;,
\label{seibold:eq:linear_minimization}
\end{equation}
where the weights $w_i=w(\|\vec{x}_i-\vec{x}_0\|_2)$ are defined by an appropriately
decaying (i.e.~$w(\delta) = o(|\delta|^{-2})$, see \cite{Seibold2008})
non-negative distance weight function $w$.
Problem \eqref{seibold:eq:linear_minimization} is a linear program (LP) in standard
form. It is bounded, since we have imposed sign constraints and the weights $w_i$
are all non-negative.
\begin{thm}
If the polyhedron \eqref{seibold:eq:polyhedron} is non-void, then the linear
minimization approach \eqref{seibold:eq:linear_minimization} yields minimal positive
stencils.
\end{thm}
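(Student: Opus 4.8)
The plan is to reduce the statement to the fundamental theorem of linear programming together with the elementary structure of basic feasible solutions. First I would note that \eqref{seibold:eq:linear_minimization} is a linear program in standard form which is feasible, since by hypothesis the polyhedron $P$ of \eqref{seibold:eq:polyhedron} is non-void, and whose objective is bounded below on $P$, as already observed (it is a non-negative combination of the non-negative variables $s_i$). Since a non-empty polyhedron in standard form contains no line and hence has a vertex, the fundamental theorem of linear programming gives that the minimum is attained at a vertex $\vec{s}^\ast$ of $P$, i.e.\ at a basic feasible solution.

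Next I would count the support of $\vec{s}^\ast$. The equality constraints $V\cdot\vec{s}=\vec{b}$ amount to $k=\tfrac{d(d+3)}{2}$ scalar equations, so a basic feasible solution has at most $\operatorname{rank}(V)\le k$ nonzero entries, the remaining components of $\vec{s}^\ast$ being zero. Deleting the neighbors $\vec{x}_i$ with $s^\ast_i=0$ removes only vanishing terms from every sum in \eqref{seibold:eq:constraints_laplace} (respectively \eqref{seibold:eq:constraints_Neumann}), hence leaves a stencil on $m'\le k$ neighbors that still satisfies the same constraints.

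It then remains to check the three defining properties of the reduced stencil $(s_0,\vec{s}^\ast)$. Consistency (Def.~\ref{seibold:def:consistency}) holds because the rows of $V\cdot\vec{s}=\vec{b}$ are precisely the linear and quadratic relations of \eqref{seibold:eq:constraints_laplace}, while putting $s_0=-\sum_{i\ge 1}s^\ast_i$ enforces the constant relation. Minimality (Def.~\ref{seibold:def:minimal_stencil}) is exactly the bound $m'\le k$ established above. For positivity (Def.~\ref{seibold:def:positive_stencil}), feasibility gives $s^\ast_i\ge 0$; moreover $\vec{b}\ne 0$ (for the Laplace stencil the quadratic block of $\vec{b}$ equals $2I$), so $\vec{s}^\ast\ne 0$ and therefore $s_0=-\sum_{i\ge 1}s^\ast_i<0$.

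The only step that is more than bookkeeping is the middle one: the fact that a vertex of a standard-form polyhedron with $k$ equality constraints has at most $k$ nonzero coordinates, and the accompanying remark that zero entries may be discarded without disturbing \eqref{seibold:eq:constraints_laplace}. Everything else is a matter of applying the fundamental theorem of linear programming and unwinding Definitions \ref{seibold:def:consistency}, \ref{seibold:def:minimal_stencil} and \ref{seibold:def:positive_stencil}. One could add the practical remark that such an optimal vertex --- hence a concrete minimal positive stencil --- is delivered by the simplex method under any fixed anticycling pivoting rule, which is how the stencils are computed in practice.
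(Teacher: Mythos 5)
Your argument is correct and follows essentially the same route as the paper: the paper's proof also invokes the fundamental theorem of linear programming to obtain a basic solution with at most $k$ nonzero entries (giving minimality, Def.~\ref{seibold:def:minimal_stencil}) and notes that the sign constraints in \eqref{seibold:eq:linear_minimization} give positivity. You merely spell out the feasibility/boundedness hypotheses, the support count for a basic feasible solution, and the sign of $s_0$ in more detail than the paper does.
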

\begin{proof}
The sign constraints in \eqref{seibold:eq:linear_minimization} ensure that the selected
stencil is \emph{positive}. The existence of a \emph{minimal} solution is ensured by
the \emph{fundamental theorem of linear programming} \cite{Vanderbei2001}.
If the LP \eqref{seibold:eq:linear_minimization} has a solution, then it also has a
basic solution, in which at most $k$ of the $m$ stencil entries $s_i$ are different
from zero.
\end{proof}
In contrast to least squares methods, the LP approach yields non-zero values only for
a few selected points. This is in line with the work of
Donoho \cite{Donoho2006_1,Donoho2006_2}, who shows that a key advantage of
$\ell^1$ minimization is sparsity.
\begin{rem}
For regular grids, the linear minimization approach selects standard finite difference
stencils. For instance, for a regular Cartesian grid, the classical 5-point stencil (2d)
or 7-point stencil (3d), respectively, is obtained. In these cases, the basic solution is
degenerate, i.e.~some of the basis variables are zero.
Also for the configuration given in Example~\ref{seibold:ex:qm_nonpos_stencil}, the
linear minimization approach selects the classical 5-point Laplace stencil.
\end{rem}

\section{Existence of Positive Stencils and Matrix Connectivity}
\label{seibold:sec:lp_solution_existence_connectivity}
The question of the existence of positive stencils, i.e.~whether the
polyhedron \eqref{seibold:eq:polyhedron} is non-void, is nontrivial.
In \cite{Seibold2008}, the connection between the local point cloud geometry and the
existence of a positive stencil has been presented. The key idea is to consider the
dual linear program to \eqref{seibold:eq:linear_minimization} by using
Farkas' Lemma \cite{Vanderbei2001}. This yields a simple necessary criterion, as well as
a simple sufficient criterion, as follows.

\begin{thm}[Necessary Criterion for Positive Stencils]
\label{seibold:thm:mps_pos_stencil_necessary}
Let the considered central point w.l.o.g.~lie in the origin.
If a set of points $X\subset\mathbb{R}^d$ around the origin admits a positive
Laplace stencil, then they must not lie in one and the same half space (with
respect to an arbitrary hyperplane through the origin).
\end{thm}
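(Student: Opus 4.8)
The plan is to derive a contradiction directly from the consistency conditions \eqref{seibold:eq:constraints_laplace}, using only the first-order and the second-order relation; this is shorter than going through the dual program, although (as the section announces) the same conclusion also falls out of Farkas' Lemma, which I would record as a remark.

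First I would set up the contrapositive. Suppose $X=\{\vec x_1,\dots,\vec x_m\}$ admits a positive Laplace stencil $(s_0,\dots,s_m)$, so that $s_i\ge 0$ for $i=1,\dots,m$ and the relations in \eqref{seibold:eq:constraints_laplace} hold with $\vec{\bar x}_i=\vec x_i$ (the central point being the origin), but assume for contradiction that all points lie in a common closed half space through the origin: there is a unit vector $\vec n$ with $\vec n^T\vec{\bar x}_i\ge 0$ for every $i$.

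Second, I would test the linear relation $\sum_{i=1}^m s_i\vec{\bar x}_i=0$ against $\vec n$. Taking the inner product with $\vec n$ gives $\sum_{i=1}^m s_i(\vec n^T\vec{\bar x}_i)=0$, a sum of products of nonnegative numbers; hence every term vanishes, i.e.\ $s_i(\vec n^T\vec{\bar x}_i)=0$ for all $i$, and therefore also $s_i(\vec n^T\vec{\bar x}_i)^2=(\vec n^T\vec{\bar x}_i)\bigl(s_i\,\vec n^T\vec{\bar x}_i\bigr)=0$ for all $i$. Third, I would evaluate the quadratic relation $\sum_{i=1}^m s_i(\vec{\bar x}_i\vec{\bar x}_i^T)=2I$ as a quadratic form at $\vec n$: the left-hand side equals $\sum_{i=1}^m s_i(\vec n^T\vec{\bar x}_i)^2=0$ by the previous step, while the right-hand side equals $2\,\vec n^T\vec n=2$. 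The contradiction $0=2$ finishes the proof.

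There is no real obstacle here; the one point to get right is that \emph{both} the first-order and the second-order conditions are genuinely used — the half-space assumption is perfectly consistent with the linear relation by itself (for instance when every point lies on the bounding hyperplane), and it is the full-rank right-hand side $2I$ of the quadratic relation that rules it out. As a remark I would also note the dual viewpoint: if $\norm{\vec{\bar x}_i}\le R$ and $\vec n^T\vec{\bar x}_i\ge 0$ for all $i$, then $(\vec p,Q)=(R\vec n,\,-\vec n\vec n^T)$ satisfies $\vec p^T\vec{\bar x}_i+\vec{\bar x}_i^TQ\vec{\bar x}_i=(\vec n^T\vec{\bar x}_i)(R-\vec n^T\vec{\bar x}_i)\ge 0$ for every $i$, while pairing this dual vector with the right-hand side $\vec b$ of \eqref{seibold:eq:linear_system} gives $2\,\mathrm{tr}(Q)=-2<0$; by Farkas' Lemma this certifies that the polyhedron \eqref{seibold:eq:polyhedron} is empty, which is again the contrapositive of the claim.
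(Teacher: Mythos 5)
Your argument is correct, and it takes a more elementary route than the one the paper points to. The paper itself defers the proof to the cited reference and announces that the criterion there is obtained from the dual linear program via Farkas' Lemma; you instead argue directly from the primal consistency constraints \eqref{seibold:eq:constraints_laplace}: testing the first-order relation $\sum_i s_i\vec{\bar x}_i=0$ against the normal $\vec n$ of the putative half space forces $s_i\,(\vec n^T\vec{\bar x}_i)=0$ termwise (nonnegativity of both factors), and then the quadratic relation evaluated as a form at $\vec n$ gives $0=\vec n^T(2I)\vec n=2$, a contradiction. This handles the closed half space (hence a fortiori the open one), and you are right to flag that the second-order condition is indispensable, since points on the bounding hyperplane are compatible with the linear relation alone. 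What the direct argument buys is self-containedness and brevity; what the dual/Farkas route buys is a uniform framework: the same duality that certifies infeasibility here (your certificate $(\vec p,Q)=(R\vec n,-\vec n\vec n^T)$ with $\vec p^T\vec{\bar x}_i+\vec{\bar x}_i^TQ\vec{\bar x}_i\ge 0$ and pairing $2\,\mathrm{tr}(Q)=-2<0$ is a valid Farkas witness) is also the tool that yields the \emph{sufficient} cone criterion of Thm.~\ref{seibold:thm:mps_pos_stencil_sufficient}, which your primal contradiction does not address. Keeping your Farkas remark alongside the direct proof is therefore a sensible way to connect to the quantitative results of Sect.~\ref{seibold:sec:lp_solution_existence_connectivity}.
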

\begin{thm}[Sufficient Criterion for Positive Stencils]
\label{seibold:thm:mps_pos_stencil_sufficient}
Let the considered central point w.l.o.g.~lie in the origin. Consider $d\in\{2,3\}$,
and let $\beta=\sqrt{2}-1$ in 2d, and $\beta=\sqrt{\frac{1}{6}(3-\sqrt{6})}$ in 3d.
Assume the set of points $X\subset\mathbb{R}^d$ has the property that in any
(i.e.~$\vec{v}$ with $\norm{\vec{v}}_2=1$ arbitrary) cone
$\vec{v}\cdot\vec{x}>\prn{1+\beta^2}^{-\frac{1}{2}}\|\vec{x}\|_2$,
at least one neighboring point $\vec{x}_i\in X$ is contained.
Then a positive Laplace stencil exists.
\end{thm}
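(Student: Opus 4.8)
The plan is to prove the contrapositive via the linear-programming duality that also underlies Theorem~\ref{seibold:thm:mps_pos_stencil_necessary}: the polyhedron \eqref{seibold:eq:polyhedron} is empty precisely when a certain dual feasibility problem is solvable, and I would exhibit a point of $X$ that no such dual solution could tolerate. With the central point placed at the origin (so $\vec{\bar x}_i=\vec{x}_i$), let $V,\vec{b}$ encode the linear and quadratic constraints of \eqref{seibold:eq:constraints_laplace}. Applying Farkas' Lemma to $V\vec{s}=\vec{b}$, $\vec{s}\ge 0$ --- and using that the cone $\{V\vec{s}:\vec{s}\ge 0\}$ is polyhedral, hence closed --- yields an exact dichotomy: either $P\neq\emptyset$, or there are a vector $\vec{a}\in\mathbb{R}^d$ and a symmetric matrix $C\in\mathbb{R}^{d\times d}$ for which the quadratic
\begin{equation*}
q(\vec{x})=\vec{a}\cdot\vec{x}+\vec{x}^T C\vec{x}
\end{equation*}
satisfies $q(\vec{x}_i)\ge 0$ for every neighbor $\vec{x}_i\in X$, while $\mathrm{tr}(C)<0$. (The components of $\vec{a}$ pair with the linear constraints, $C$ pairs in the Frobenius inner product with the quadratic constraint $\sum_i\prn{\vec{\bar x}_i\cdot\vec{\bar x}_i^T}s_i=2I$, and its right-hand side $2I$ contributes $2\,\mathrm{tr}(C)$ to the dual objective.) It therefore suffices to show that the cone-covering hypothesis on $X$ forbids any such ``dual certificate''.

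So suppose $\vec{a},C$ exist with $\mathrm{tr}(C)<0$. Then $C$ has a negative eigenvalue, the open cone $N_C=\{\vec{x}\neq\vec{0}:\vec{x}^T C\vec{x}<0\}$ is non-empty, and $q<0$ on $N_C\cap\{\vec{x}:\vec{a}\cdot\vec{x}\le 0\}$. I would then show that this set contains a full solid cone $\{\vec{x}:\vec{v}\cdot\vec{x}>\prn{1+\beta^2}^{-1/2}\norm{\vec{x}}_2\}$ about a suitable unit vector $\vec{v}$, with $\beta$ as in the statement. The hypothesis would then supply a point $\vec{x}_i\in X$ inside that cone, so that $q(\vec{x}_i)<0$, contradicting $q(\vec{x}_i)\ge 0$; hence $P\neq\emptyset$. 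The set one has to work with is indeed $N_C\cap\{\vec{a}\cdot\vec{x}\le 0\}$ rather than the possibly larger $\{q<0\}$, since a cone through the origin lies in $\{q<0\}$ only if each of its directions $\vec{u}$ satisfies both $\vec{a}\cdot\vec{u}\le 0$ and $\vec{u}^T C\vec{u}\le 0$; otherwise $q(t\vec{u})>0$ for small, respectively large, $t>0$.

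What remains is to determine how large a cone fits into $N_C\cap\{\vec{a}\cdot\vec{x}\le 0\}$ in the worst case, and this is where the constant $\beta$ originates: one minimises, over all $\vec{a}$ and all symmetric $C$ with $\mathrm{tr}(C)<0$, the half-angle of the largest inscribable such cone, and the minimum has half-angle $\arctan\beta$ --- equivalently, its cosine equals $\prn{1+\beta^2}^{-1/2}$. By homogeneity $C$ may be diagonalised with ordered eigenvalues of negative sum and rescaled; the extremal configuration makes $N_C$ as narrow as a negative trace allows (so $C$ is nearly positive semidefinite) and places $\vec{a}$ transverse to the axis of $N_C$, cutting away as much of it as possible. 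In $d=2$ this collapses to the elementary observation that a direction within $22.5^\circ$ of a suitable $\vec{v}$ always survives, together with $\tan 22.5^\circ=\sqrt{2}-1=\beta$; in $d=3$ the analogous optimisation over the ratio of the two positive eigenvalues of $C$ and over the direction of the cutting plane is much more delicate and yields the stated $\beta=\sqrt{\tfrac{1}{6}\prn{3-\sqrt{6}}}$. I expect this three-dimensional extremal problem to be the main obstacle; the Farkas reduction and the planar case are routine by comparison, and the hypothesis $d\in\{2,3\}$ is exactly the range in which this constant has been worked out.
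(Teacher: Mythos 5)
Your overall route is exactly the one the paper points to (the paper itself gives no proof here, deferring to \cite{Seibold2008} and noting that the key idea is dualization of the feasibility problem \eqref{seibold:eq:polyhedron} via Farkas' Lemma): the dual certificate $(\vec a,C)$ with $q(\vec x)=\vec a\cdot\vec x+\vec x^TC\vec x\ge 0$ on $X$ and $\mathrm{tr}(C)<0$ is set up correctly, and you rightly observe that, because the hypothesis only guarantees a point of $X$ \emph{somewhere} in an infinite cone, one must inscribe a full round cone into the set of directions $\vec u$ with $\vec a\cdot\vec u\le 0$ and $\vec u^TC\vec u\le 0$. The 2d case is also handled correctly: one negative eigenvalue forces the double cone $\{\vec u^TC\vec u<0\}$ to have half-opening strictly above $45^\circ$, the worst half-space cut passes through its axis, and an open sector of $45^\circ$ survives, giving $\beta=\tan 22.5^\circ=\sqrt2-1$.

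The genuine gap is the three-dimensional case: the constant $\beta=\sqrt{\tfrac16(3-\sqrt6)}$ \emph{is} the outcome of the extremal problem you defer, so asserting that the optimisation ``yields the stated $\beta$'' proves nothing in the dimension where the theorem has its real content. Worse, the heuristic you offer for the extremizer --- make $\{\vec u^TC\vec u<0\}$ as narrow as the trace constraint allows and cut transversally to its axis --- does not lead to the stated constant: in the narrowest admissible limit (eigenvalues proportional to $(-1,0,1)$) an axial cut still leaves an inscribed round cone of half-angle about $22.5^\circ$, and a circular cone with any cut leaves at least roughly $27^\circ$, both well above $\arctan\beta\approx 16.9^\circ$. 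Hence the true worst case is an elliptical cone of intermediate eccentricity, in general with an obliquely oriented cutting plane, and one must also dispose of the case of two negative eigenvalues and convert the strict inequality $\mathrm{tr}(C)<0$ into containment of an \emph{open} cone of half-angle exactly $\arctan\beta$. That two-parameter (eccentricity and cut direction) minimax computation is the substance of the 3d proof; as it stands, your argument establishes the theorem only for $d=2$.
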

We call this criterion \emph{cone criterion}.
In 2d, the cone has a total opening angle 45\ensuremath{^\circ}, and in 3d,
the total opening angle is 33.7\ensuremath{^\circ}.
Using the cone criterion, the existence of positive stencils can be guaranteed by
applying the $\ell^1$ minimization \eqref{seibold:eq:linear_minimization} to a
large enough candidate set of neighboring points.
As in \cite{Levin1998}, we define
\begin{defn}
\label{seibold:def:mesh_size}
Let $\Omega\subset\mathbb{R}^d$ be a domain and $X=\{\vec{x}_1,\dots,\vec{x}_n\}$
a point cloud. The \emph{mesh size} $h$ is defined as the minimal real number, such
that $\bar\Omega\subset \bigcup_{i=1}^n \bar B\prn{\vec{x}_i,\frac{h}{2}}$,
where $\bar B\prn{\vec{x},r}$ is the closed ball of radius $r$ centered in $\vec{x}$
and $\bar\Omega$ is the closure of $\Omega$.
\end{defn}
\begin{thm}[Condition on Point Cloud to Guarantee Positive Stencils]
\label{seibold:thm:pos_stencil_mesh_size}
Consider a point cloud of mesh size $h$, and let $\gamma$ be defined as in
Thm.~\ref{seibold:thm:mps_pos_stencil_sufficient}. If the radius of
considered candidate points satisfies $r > \frac{1}{\sin(\gamma/2)}\frac{h}{2}$,
then for every interior point which is sufficiently far from the boundary, a positive
stencil exists.
\end{thm}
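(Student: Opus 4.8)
The plan is to show that the cone criterion of Theorem~\ref{seibold:thm:mps_pos_stencil_sufficient} is met whenever the candidate radius $r$ is large enough relative to the mesh size $h$. Fix an interior point $\vec{x}_0$, placed w.l.o.g.~at the origin, and fix an arbitrary unit direction $\vec{v}$; we must exhibit a candidate point $\vec{x}_i \in X$ with $\|\vec{x}_i\|_2 < r$ lying in the cone $\vec{v}\cdot\vec{x} > (1+\beta^2)^{-1/2}\|\vec{x}\|_2$. Writing $\gamma$ for the total opening angle of this cone (so $\cos(\gamma/2) = (1+\beta^2)^{-1/2}$, i.e.~$\gamma = 45^\circ$ in 2d and $\gamma \approx 33.7^\circ$ in 3d), the cone is exactly the set of points whose angular deviation from the ray $\mathbb{R}_{\ge 0}\vec{v}$ is less than $\gamma/2$.

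First I would produce a target point strictly inside the cone and strictly inside the candidate radius. A natural choice is $\vec{p} = t\,\vec{v}$ for a scalar $t \in (0,r)$ to be fixed; this point lies on the cone axis, hence in the open cone, and one should take $t$ close to $r$ but with enough room to spare (the precise value falls out of the covering estimate below). Next I would invoke the definition of mesh size: since $\bar\Omega \subset \bigcup_{j} \bar B(\vec{x}_j, h/2)$ and $\vec{p} \in \bar\Omega$ (here is where ``sufficiently far from the boundary'' enters — it guarantees $\vec{p}$, and in fact a whole ball around it, lies in $\bar\Omega$), there is a point $\vec{x}_i \in X$ with $\|\vec{x}_i - \vec{p}\|_2 \le h/2$. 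It remains to check that this $\vec{x}_i$ satisfies both constraints. For the radius constraint, $\|\vec{x}_i\|_2 \le \|\vec{p}\|_2 + h/2 = t + h/2$, so choosing $t \le r - h/2$ suffices. For the cone constraint, I would estimate the angle between $\vec{x}_i$ and $\vec{v}$: the point $\vec{x}_i$ lies in the ball $\bar B(\vec{p}, h/2)$, and the half-angle of the smallest cone with apex at the origin containing that ball is $\arcsin\big((h/2)/\|\vec{p}\|_2\big) = \arcsin\big(h/(2t)\big)$. Thus $\vec{x}_i$ lies in the desired cone provided $\arcsin(h/(2t)) < \gamma/2$, i.e.~$t > \frac{1}{\sin(\gamma/2)}\cdot\frac{h}{2}$. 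Combining the two requirements, a suitable $t$ exists precisely when $r - h/2 > \frac{h}{2\sin(\gamma/2)}$; since $\frac{1}{\sin(\gamma/2)} > 1$, the weaker single inequality $r > \frac{1}{\sin(\gamma/2)}\cdot\frac{h}{2}$ already leaves room (taking $t$ just below $r$ and noting $r - h/2 \ge r - \frac{h}{2\sin(\gamma/2)} \cdot \sin(\gamma/2) > \ldots$), which is exactly the stated hypothesis. Once the cone criterion holds for every $\vec{v}$, Theorem~\ref{seibold:thm:mps_pos_stencil_sufficient} immediately yields a positive Laplace stencil.

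The main obstacle I anticipate is the bookkeeping around the two competing constraints on $t$ — the radius bound pushes $t$ down, the cone/angle bound pushes $t$ up — and verifying that the single clean hypothesis $r > \frac{h}{2\sin(\gamma/2)}$ genuinely implies the conjunction $r - h/2 > \frac{h}{2\sin(\gamma/2)}$ with strict inequality, which needs a small argument using $\sin(\gamma/2) < 1$ and possibly an infinitesimal slack (this is where the word ``sufficiently'' in the theorem absorbs the boundary-layer and non-strictness issues). A secondary point to handle carefully is that the neighbour $\vec{x}_i$ supplied by the mesh-size covering must be an \emph{interior} candidate point (or at least admissible in the stencil) and must be distinct from $\vec{x}_0$ — both follow from $\vec{p}$ being bounded away from the boundary and from $t > 0$, but they should be stated. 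The geometric core — that a ball of radius $h/2$ centred at distance $t$ from the origin subtends a cone of half-angle $\arcsin(h/(2t))$ — is elementary and dimension-independent, so the 2d and 3d cases need not be separated beyond plugging in the respective value of $\gamma$.
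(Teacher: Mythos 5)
Your reduction to the cone criterion is the natural route, but the step where you combine the two constraints on $t$ contains a genuine error, not just bookkeeping to be tidied. Your construction needs a single $t$ with $t > \frac{h/2}{\sin(\gamma/2)}$ (so that the ball $\bar B(t\vec{v},h/2)$ lies inside the open cone of half-angle $\gamma/2$) and simultaneously $t < r-\frac{h}{2}$ (so that the covering point, which may sit anywhere in that ball, lands inside the candidate ball $B(\vec{x}_0,r)$). This interval is nonempty if and only if $r > \prn{1+\frac{1}{\sin(\gamma/2)}}\frac{h}{2}$, which is strictly \emph{stronger} than the stated hypothesis $r > \frac{1}{\sin(\gamma/2)}\frac{h}{2}$; your claim that the weaker inequality ``already leaves room'' because $\frac{1}{\sin(\gamma/2)}>1$ runs the implication backwards, and the trailing chain ``$r-\frac{h}{2} \ge r - \frac{h}{2\sin(\gamma/2)}\sin(\gamma/2) > \ldots$'' is a tautology that does not close it. If $r$ exceeds the stated threshold only slightly, then $r-\frac{h}{2} < \frac{h/2}{\sin(\gamma/2)}$ and no admissible $t$ exists. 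The clause ``sufficiently far from the boundary'' cannot absorb this: it concerns the distance of $\vec{x}_0$ to $\partial\Omega$ (ensuring $t\vec{v}$ and its $h/2$-ball lie in $\bar\Omega$), not the slack between $r$ and $h$, so it cannot supply the missing additive term $\frac{h}{2}$.

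Moreover, the defect is intrinsic to this covering strategy, not to your choice of target point: the largest ball inscribed in the truncated cone $C\cap B(\vec{x}_0,r)$ has radius $\frac{r\sin(\gamma/2)}{1+\sin(\gamma/2)}$, so demanding that it have radius at least $\frac{h}{2}$ reproduces exactly the threshold $r \ge \prn{1+\frac{1}{\sin(\gamma/2)}}\frac{h}{2}$; for $r$ between the two thresholds one can cover the truncated cone entirely by $h/2$-balls centered outside it, so a cloud of mesh size $h$ need not place any candidate point in that cone, and the cone criterion cannot be verified by this argument alone. Note also that the paper itself gives no proof of this theorem but defers it (together with the boundary conditions) to the cited reference, so there is no in-paper argument to match; what can be said is that your argument, as written, proves the statement only under the stronger radius condition, and obtaining the stated constant $\frac{1}{\sin(\gamma/2)}\frac{h}{2}$ would require a genuinely different idea (e.g.\ a sharper sufficient condition than one point per cone within radius $r$), or else the extra $\frac{h}{2}$ must be added to the hypothesis.
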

The proof, and a description of special conditions near the boundary, are given
in \cite{Seibold2008}. The ratio of candidate radius to maximum hole size radius
$\frac{r}{h/2}$ equals 2.61 in 2d, and 3.45 in 3d.
In practice, one generally observes that already much smaller smaller ratios yield
positive stencils.

Due to Thm.~\ref{seibold:thm:ess_irred_is_ess_diagdom} and
Thm.~\ref{seibold:thm:Lmatrix_is_Mmatrix}, the matrix composed of positive stencils is
an M-matrix, if every interior and Neumann boundary point is connected to a Dirichlet
boundary point. As discussed in \cite{Seibold2008}, the connectivity of every point to
the boundary follows from Thm.~\ref{seibold:thm:mps_pos_stencil_necessary}, and the
connectivity to a Dirichlet point can be ensured under comparably mild technical
assumptions. Hence, if the implementation of the minimal positive stencil method ensures
that Dirichlet boundary points are used in the stencils of nearby points, then the
approach is guaranteed to generate M-matrices.

\section{Computational Effort to Generate the System Matrix}
\label{seibold:sec:effort_generation}
In meshfree methods, the actual generation of the linear system that approximates the
Poisson equation \eqref{seibold:eq:poisson_equation} carries a non-negligible
computational cost. For each interior point, a minimization problem has to be solved.

Classical least squares methods are based on the quadratic minimization
\eqref{seibold:eq:quadratic_minimization}.
The computation of the stencil by expression \eqref{seibold:eq:stencil_lsq}
requires first the set-up of the $k\times k$ matrix $VWV^T$, then the solution of the
the linear system $(VWV^T)\cdot\vec{v}=\vec{b}$, and lastly the computation of the
product $\vec{s} = WV^T\cdot\vec{v}$. This requires $k(k+1)m+\frac{k^3}{3}$ floating
point operations \cite[p.~150]{SeiboldDiss2006}.

In contrast, the linear minimization approach \eqref{seibold:eq:linear_minimization}
does not admit an explicit solution formula. Instead, a small linear program (LP) has
to be solved. To our knowledge, there are no general results about efficient methods
for such small LPs, especially any that would consider the special structure of the
Vandermonde matrix. A numerical comparison of various methods has been presented
in \cite[p.~148]{SeiboldDiss2006}. Simplex methods \cite{Chvatal1983} perform best for
the arising LPs. A basis change corresponds to one stencil point replacing another.
The theoretical worst case performance of simplex methods is not observed. Typical runs
find the solution in about $1.5 k$ steps, resulting in a complexity of $O(k^2m)$, which
equals the asymptotic effort of least squares approaches.

\section{Numerical Results}
\label{seibold:sec:numerics}
The two discretization approaches, least-squares vs.~minimal positive stencils, have
been compared in \cite{Seibold2008} in terms of accuracy of the finite difference
approximation. For a sequence of point clouds, the global truncation errors with both
approaches have been estimated numerically. The observation is that the approximation
errors of the two approaches differ by at most 20\%.
An interesting aspect is that the measured order of convergence is generally second
order, although the constraints \eqref{seibold:eq:constraints_laplace} enforce only
first order accuracy. This effect is also observed in other types of meshfree approaches.

Since the two approaches do not differ significantly in the accuracy of approximation,
a comparison of their computational effort when generating and solving the linear system
is of interest. The presented linear minimization approach generates optimally sparse
M-matrices, and we can hope that the solution of the arising linear systems is faster
than for systems that are generated with classical least squares approaches.

We consider two types of multigrid/multilevel approaches.
In Sect.~\ref{seibold:subsec:performance_samg}, we investigate the performance of
SAMG (``Algebraic Multigrid Methods for Systems'') \cite{SAMG}, by the
\emph{Fraunhofer Institute for Algorithms and Scientific Computing}.
It is a library of AMG subroutines, based on the code
RAMG05 \cite[App.~A]{TrottenbergOosterleeSchuller2001}, which is a successor of
the public domain code AMG1R5 \cite{RugeStueben1986}. SAMG is implemented as
a preconditioner for a BiCGstab \cite{VanDerVorst1992} iteration. It performs
a full multigrid cycle with Gauss-Seidel relaxation, and sparse Gaussian elimination
on the coarsest level. The numerical tests presented here are performed using
standard coarsening \cite[pp.~473]{TrottenbergOosterleeSchuller2001},
though aggressive coarsening strategies \cite[pp.~476]{TrottenbergOosterleeSchuller2001}
frequently turn out to improve the performance.
In Sect.~\ref{seibold:subsec:performance_amli}, we consider an AMLI type method,
implemented by C.~Mense and R.~Nabben, TU Berlin, 2005.

\subsection{Performance of SAMG}
\label{seibold:subsec:performance_samg}
For a 3d Poisson test problem, the classical least squares approach is compared to the
linear minimization approach in terms of the computational cost. We consider the geometry
shown in Fig.~\ref{seibold:fig_poisson3d_geometry}. It is motivated by the simulation of
the flow of a viscous material in a geometry with a thin outlet. More detail is given
in \cite[p.~183]{SeiboldDiss2006}. The considered Poisson
equation \eqref{seibold:eq:poisson_equation} has a smooth right hand side $f$. Neumann
boundary conditions are prescribed everywhere except for the thin outlet, at which
Dirichlet boundary conditions are prescribed. For the considered geometry, the classical
least squares approach is compared to the linear minimization approach in terms of the
computational cost. For a sequence of point clouds with increasing resolution, the
geometry is discretized. One approximation is generated using a weighted least squares
approach \eqref{seibold:eq:quadratic_minimization}, with about 40 neighbors for each
interior point. Another approximation is generated from the $\ell^1$ minimization approach
\eqref{seibold:eq:linear_minimization}. By construction, each interior point has only
10 neighbors. Consequently, the $\ell^1$ matrices possess about 4 times fewer non-zero
entries than the LSQ matrices.

\begin{figure}
\centering
\begin{minipage}[t]{.7\textwidth}
\centering
\includegraphics[width=.86\textwidth]{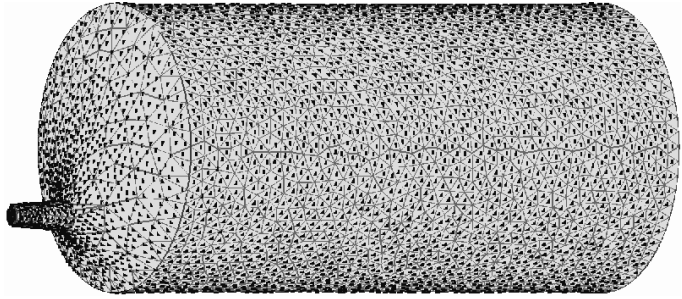}
\vspace{-.3em}
\caption{Geometry for the 3d Poisson test problem}
\label{seibold:fig_poisson3d_geometry}
\end{minipage}
\end{figure}

\begin{figure}
\centering
\begin{minipage}[t]{.49\textwidth}
\centering
\includegraphics[width=0.9\textwidth]{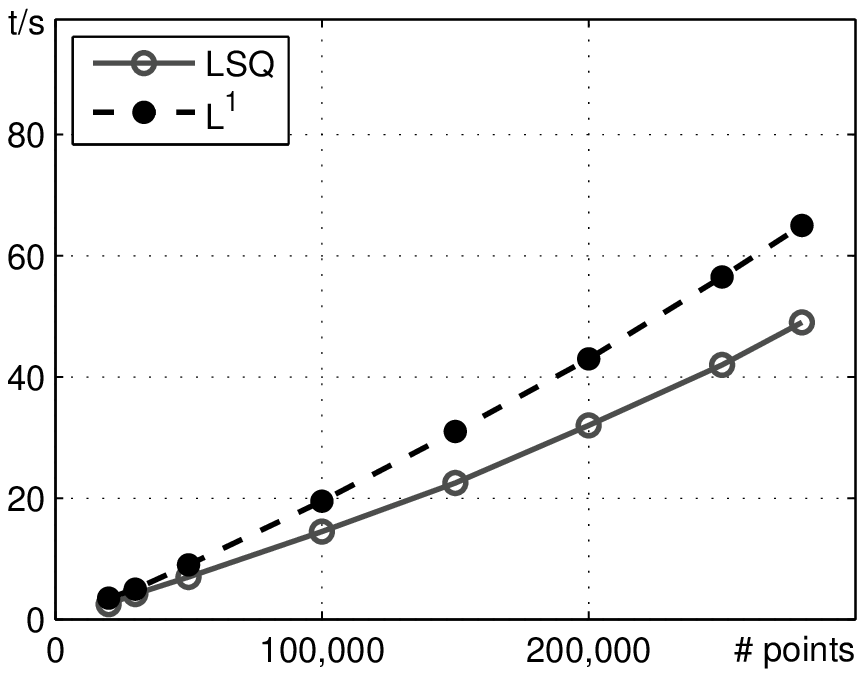}
\vspace{-.5em}
\caption{CPU times for setup}
\label{seibold:fig_cputimes_setup}
\end{minipage}
\hfill
\begin{minipage}[t]{.49\textwidth}
\centering
\includegraphics[width=0.9\textwidth]{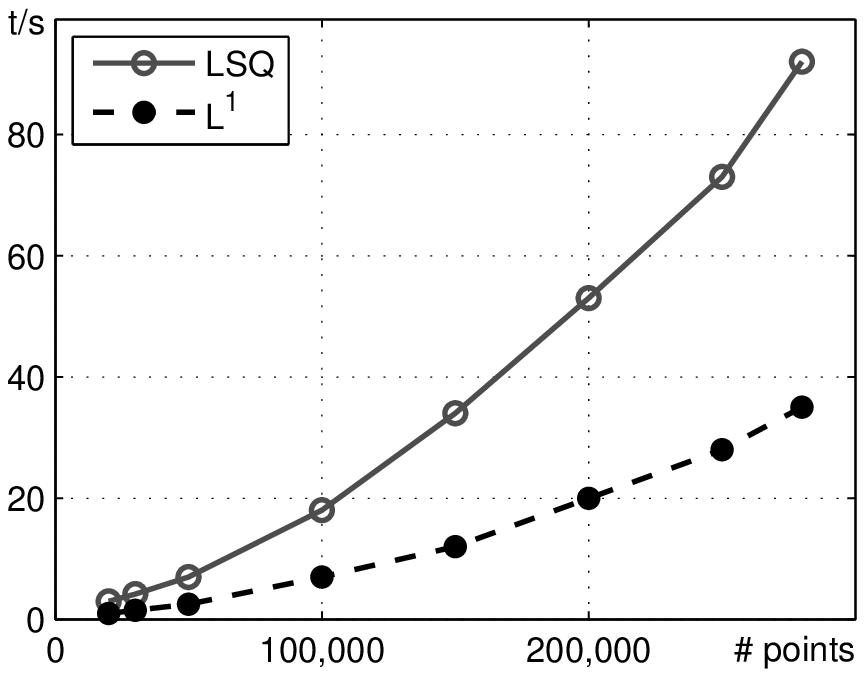}
\vspace{-.5em}
\caption{CPU times for solve with BiCGstab}
\label{seibold:fig_cputimes_solve_bicg}
\end{minipage}

\vspace{1em}

\begin{minipage}[t]{.49\textwidth}
\centering
\includegraphics[width=0.9\textwidth]{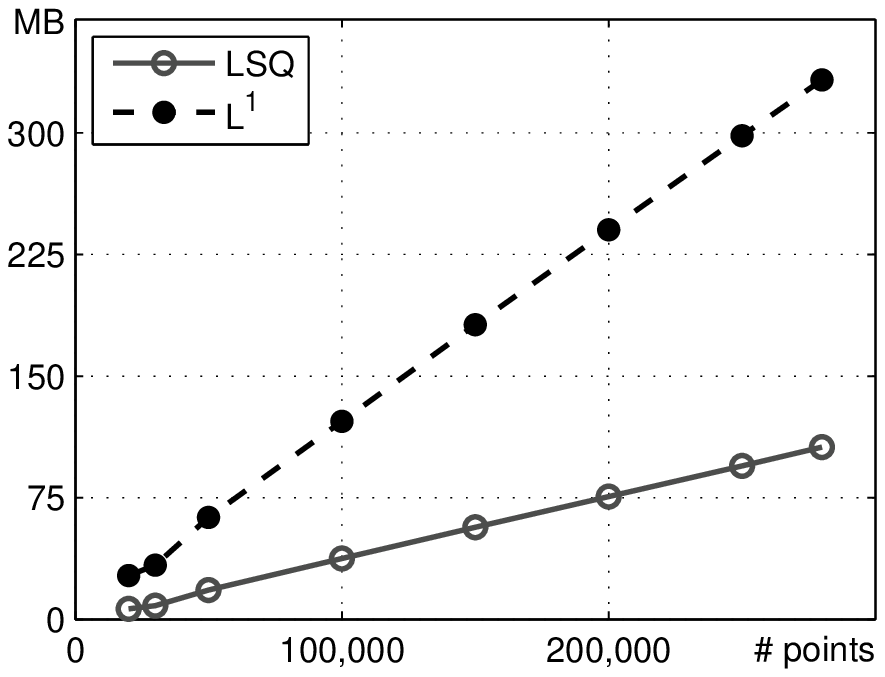}
\vspace{-.5em}
\caption{Memory consumption with SAMG}
\label{seibold:fig_cputimes_memory}
\end{minipage}
\hfill
\begin{minipage}[t]{.49\textwidth}
\centering
\includegraphics[width=0.9\textwidth]{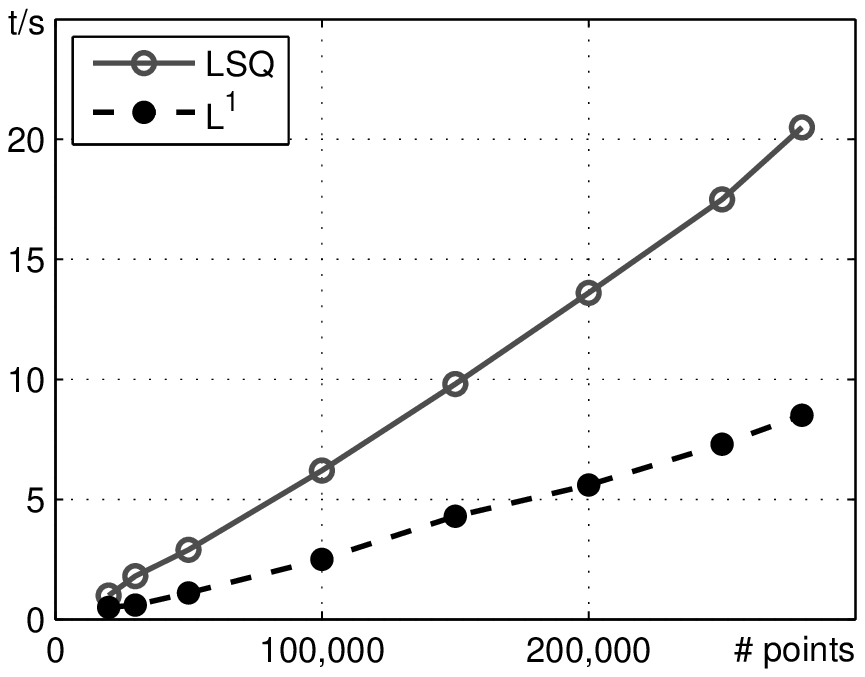}
\vspace{-.5em}
\caption{CPU times for solve with SAMG}
\label{seibold:fig_cputimes_solve_amg}
\end{minipage}
\end{figure}

Fig.~\ref{seibold:fig_cputimes_setup} shows the CPU times for the setup of the system
matrices. The solid curve represents the costs with the least squares approach,
which is based on expression  \eqref{seibold:eq:stencil_lsq}. The dashed curve shows
the cost of the linear optimization approach, for which the simplex method is used.
As predicted in Sect.~\ref{seibold:sec:effort_generation}, asymptotically the two approaches
are equally expensive. In the considered example, the linear optimization approach
is about 20\% more expensive. Note that the implemented simplex algorithm is not
optimized. Better linear programming methods, and/or smart heuristics
may accelerate the matrix setup phase.

Fig.~\ref{seibold:fig_cputimes_solve_bicg} shows the CPU times for the solution of the
arising system with a BiCGstab method \cite{VanDerVorst1992},
preconditioned by ILU(0) \cite[Chap.~10]{Saad1996}.
In comparison, Fig.~\ref{seibold:fig_cputimes_solve_amg} shows the CPU times for the
solution of the same system with SAMG \cite{SAMG}.
Three aspects can be observed:
\begin{itemize}
\item
The cost of the BiCGstab method grows superlinearly with the number of unknowns.
In contrast, the SAMG cost is almost perfectly linear in the number of points.
\item
The SAMG method solves the arising linear systems significantly faster than the
BiCGstab method. For low resolutions, SAMG is 2-3 times as fast as BiCGstab.
For high resolutions, it is 4-5 times as fast.
While the used BiCGstab method is certainly not fully optimized, the performance
of SAMG is still compelling.
\item
For both solution methods, the linear systems that arise from the $\ell^1$ minimization
approach are 3-4 times faster to solve than the systems generated by the LSQ approach.
\end{itemize}
Fig.~\ref{seibold:fig_cputimes_memory} shows the memory consumption in an SAMG solve.
As one can expect, it is perfectly linear in the number of unknowns, and proportional
to the number of non-zero matrix entries.

The first conclusion we draw from this numerical experiment is that SAMG performs very
well on both LSQ matrices and $\ell^1$ matrices. In particular, it is always observed to
converge, even though the code is primarily designed for symmetric matrices, and even
though most of the considered LSQ matrices are not M-matrices. SAMG is significantly
faster than a classical BiCGstab method with ILU(0) preconditioning. In addition, the
computational effort of SAMG is linear in the number of unknowns.

The second conclusion is that the matrices obtained from the $\ell^1$ minimization
approach are solved significantly faster than classical LSQ matrices. The observed
speed-up equals the increase in sparsity that the $\ell^1$ minimization approach yields.
In another test (not shown here), we store the non-basis neighbors, as obtained by the
simplex method, as actual zero-entries in the matrix. The solution times with those
matrices turn out to be within 10\% of the solution times with the LSQ matrices.
This implies that the M-matrix property itself does not increase the actual convergence
rate of linear solvers. On the other hand, very sparse Poisson stencils are often
times observed to be less robust than larger ones. In this sense, one can interpret the
M-matrix property as an ingredient that allows the selection of a minimal stencil,
without actually worsening convergence properties of linear solvers.

\subsection{Performance of an AMLI Type Method}
\label{seibold:subsec:performance_amli}
We consider a 2d Poisson test problem on a circular domain
$\Omega = \{\vec{x}\in\mathbb{R}^2:\|\vec{x}\|_2<1\}$ as
\begin{equation*}
\begin{cases}
  -\Delta u = f &\mathrm{in~}\Omega \\
\quad\;\; u = f &\mathrm{on~}\partial\Omega
\end{cases}
\end{equation*}
where $f(x,y) = \sin(4x+0.1)+x\,\cos(2y+0.4)$.
The analytical solution is $u(x,y) = 16\sin(4x+0.1)+4x\,\cos(2y+0.4)$.
This problem is purposefully chosen to have a simple geometry, so that the focus lies
on effects due to the unstructured point cloud. In \cite[p.~173]{SeiboldDiss2006},
this problem is used to analyze discretization errors on various point clouds.
Here, we restrict to one specific point cloud, shown in
Fig.~\ref{seibold:fig_poisson2d_cloud}. Boundary points are placed equidistantly
on the unit circle. In the interior, 4000 points are placed.
For the given point cloud, we generate four
types of discretization matrices: the $\ell^1$ minimization matrix, and three LSQ
matrices, with the closest 5 / 12 / 36 neighboring points considered in each stencil.
For each matrix, the linear system is solved by various versions of AMLI type two-grid
methods, all implemented by C.~Mense and R.~Nabben, TU Berlin, 2005.

Two versions of coarsening are considered.
In \emph{default coarsening}, the first half of the unknowns is chosen as coarse scale
variables, and the other ones are selected as fine scale variables. Since here the
points are not sorted, this is equivalent to a random selection of coarse variables.
In the \emph{Ruge-St{\"u}ben coarsening} \cite{RugeStueben1987}, the coarsening of the
matrix graph is performed preferably in the directions of large couplings, where the
coupling strength (for an M-matrix) is defined as the magnitude of the corresponding
off-diagonal entries.

Four versions of AMLI type methods are considered, which are based on different
splittings \cite{MenseNabben2008_1,MenseNabben2008_2}.
Specifically, besides the standard additive AMLI, a multiplicative (MAMLI),
a reverse multiplicative (RMAMLI), and a symmetrized multiplicative (SMAMLI) splitting
are considered.

\begin{figure}
\hspace{-1.3cm}
\begin{minipage}[t]{.44\textwidth}
\centering
\includegraphics[width=.70\textwidth]{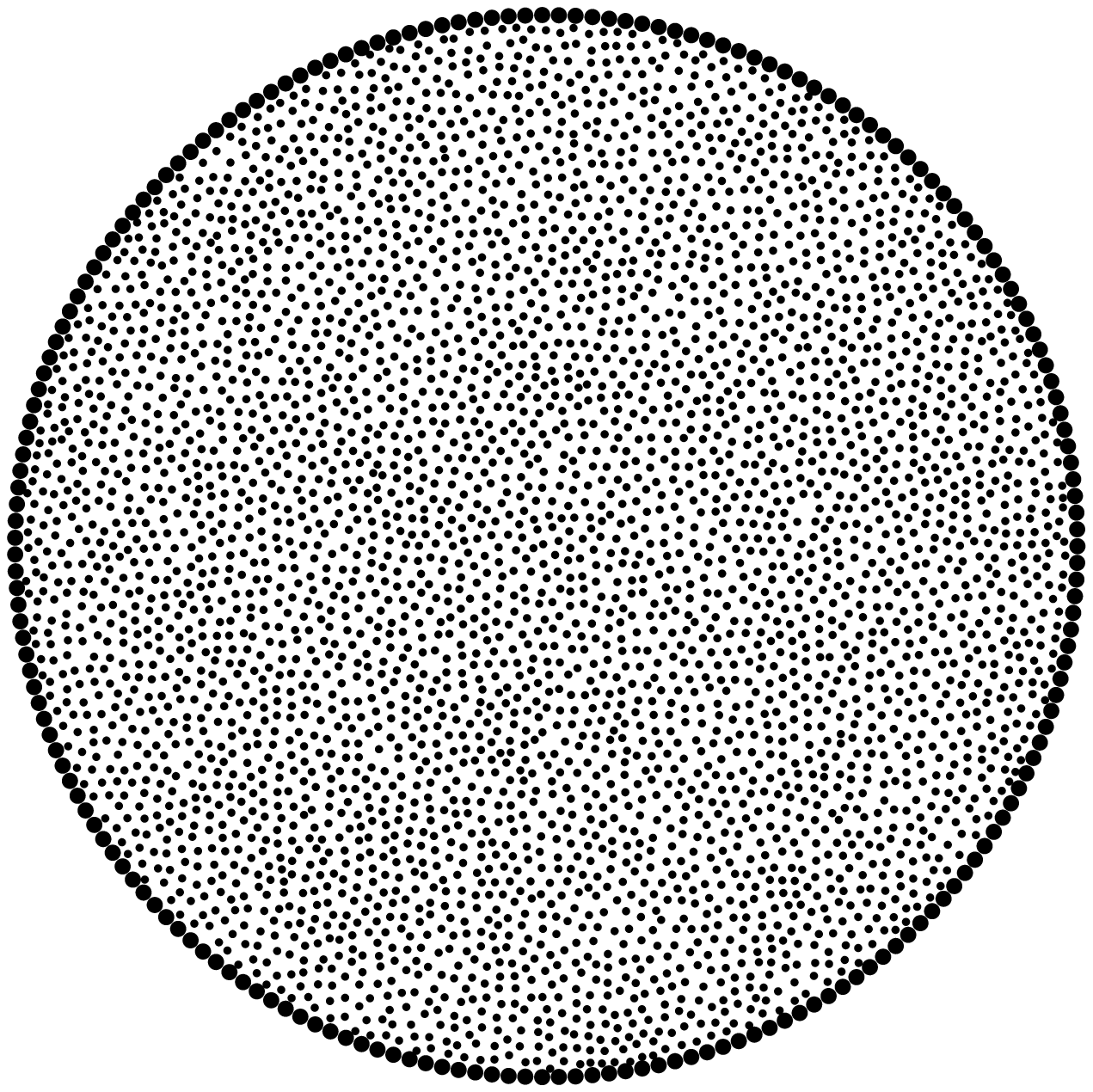}
\caption{Point cloud on a circular domain}
\label{seibold:fig_poisson2d_cloud}
\end{minipage}
\hfill
\begin{minipage}[t]{.63\textwidth}
\centering
\includegraphics[width=.99\textwidth]{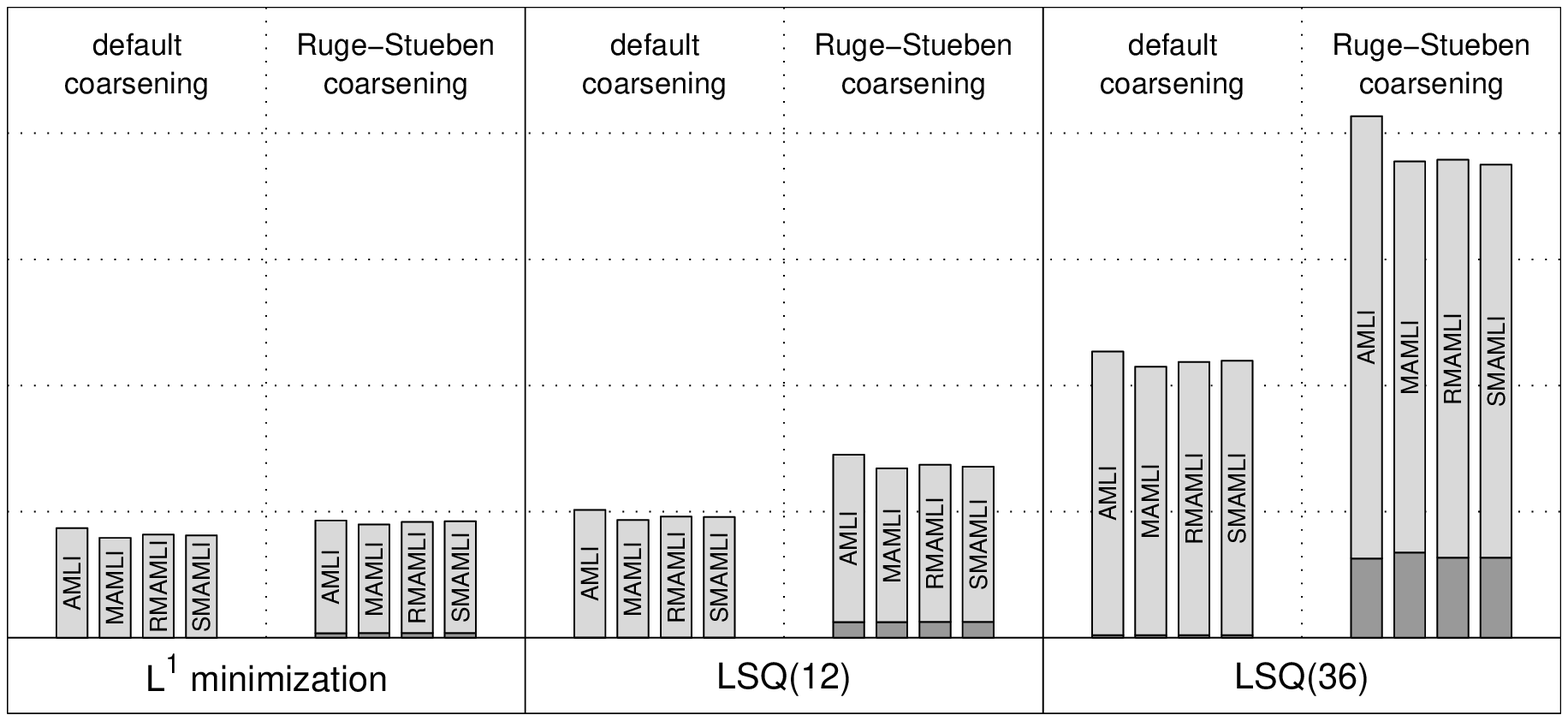}
\caption{Relative CPU times for various types of AMLI approaches}
\label{seibold:fig_amli_times}
\end{minipage}
\end{figure}

Fig.~\ref{seibold:fig_amli_times} shows the relative\footnote{The absolute CPU times
are not representative here and are thus omitted.} CPU times for the various test
cases. The left third corresponds to the $\ell^1$ matrix, the middle third shows the
LSQ approach with 12 neighbors for each point, and the right third represents
the LSQ approach with 36 neighbors. The CPU times for the approach with the 5 nearest
neighbors is not shown, since the AMLI methods failed to yield the correct solution.
Needless to say, the obtained matrices are not M-matrices. However, also for the
LSQ(12) and the LSQ(16) case, the Poisson matrices are in general not M-matrices.
Nevertheless, the AMLI methods converge. In each case, the height of the dark part
of the bar shows the setup time, and the light part shows the actual solution time.

One can observe that the $\ell^1$ matrices lead to the shortest solution times.
However, unlike with the SAMG method, the speed-up factor with respect to the LSQ(12)
case is not significant, although the $\ell^1$ matrix is only half as dense. With
respect to the LSQ(36) matrices, a noticeable speed-up is observed, although again not
as significant as one would expect from the sparsity factors. Another observation is
that for the considered example, there is no significant difference between the
various splitting versions. Finally, for larger stencils, the Ruge-St{\"u}ben
coarsening is more expensive than the default coarsening. While this could be
expected for the setup phase, it is surprising for the iteration phase.
We cannot provide a satisfying explanation for this observation.

\section{Conclusions and Outlook}
\label{seibold:sec:conclusions_outlook}
With the pressure correction step in particle methods for incompressible fluid flows,
we have presented an application that leads to large, sparse, non-symmetric matrices.
These matrices that approximate the Poisson equation are constructed using a
meshfree finite difference method. They have a different structure from traditionally
considered non-symmetric matrices, such as Markov chain representation matrices.
In particular, classical meshfree least squares approaches yield in general matrices
that do not have an M-matrix structure.

However, the M-matrix property is a key ingredient in recent convergence proofs of
AMG and AMLI type methods for non-symmetric matrices. While it is not a necessary
condition, convergence is not ensured if the M-matrix property is violated. In particle
methods, Poisson equations are solved in every time step with ever changing geometries,
and one single failure of convergence could spoil a whole long-term computation.
Therefore, until multigrid convergence can be shown under weaker assumptions, the
M-matrix structure is a property that should be guaranteed. The presented approach
does so. Using an $\ell^1$ minimization formulation, and linear programming methods,
optimally sparse M-matrices are generated. Conditions are provided that ensure the
success of this approach.

The performance of multigrid methods for meshfree finite difference approaches is
numerically investigated in various test cases. With SAMG, a sophisticated AMG method
is applied, and with AMLI, an approach is used in which convergence for M-matrices is
proved. The main conclusion is that the $\ell^1$ minimization approach guarantees
convergence and generates matrices that are significantly sparser than classical LSQ
matrices, and this sparsity results in a speed-up in multigrid solvers.

Currently, the setup of the system matrices with the $\ell^1$ minimization approach
is slightly slower than with LSQ approaches. However, more efficient linear programming
methods may turn the tide towards the linear optimization approach. The efficient solution
of the linear programs is a crucial point in the presented method, and worth a deeper
analysis. Another interesting question is whether a weaker requirement than the
M-matrix property already implies convergence of AMG and AMLI methods.
Numerical tests presented in \cite{SeiboldDiss2006} indicate that the convergence
of AMLI methods tends to be related to the inverse positivity of an LSQ matrix,
rather than an M-matrix structure. In fact, LSQ matrices are frequently observed
to be inverse positive, while not having an L-matrix structure. Alternative
characterizations of inverse positive matrices \cite{FujimotoRanade2004} may help
answer this question.

\section*{Acknowledgments}
The author would like to thank Dr.~J{\"o}rg Kuhnert, Dr.~Christian Mense,
Dr.~Klaus St{\"u}ben, and Dr.~Sudarshan Tiwari
for fruitful discussions and support on meshfree finite difference methods
and on the application of algebraic multigrid codes.
The support by the National Science Foundation is acknowledged.
The author was partially supported by NSF grant DMS--0813648.

\bibliographystyle{plain}
\bibliography{references_complete}

\begin{thebibliography}{10}

\bibitem{AxelssonVassilevski1989}
O.~Axelsson and P.~Vassilevski.
\newblock Algebraic multilevel preconditioning methods, {P}art {I}.
\newblock {\em Numer. Math.}, 56:157--177, 1989.

\bibitem{AxelssonVassilevski1990}
O.~Axelsson and P.~Vassilevski.
\newblock Algebraic multilevel preconditioning methods, {P}art {II}.
\newblock {\em SIAM J. Numer. Anal.}, 27:1569--1590, 1990.

\bibitem{BabuskaMelenk1996}
I.~Babuska and J.~M. Melenk.
\newblock The partition of unity finite element method: {B}asic theory and
  applications.
\newblock {\em Comput. Methods Appl. Mech. Engrg.}, 139:289--314, 1996.

\bibitem{BabuskaMelenk1997}
I.~Babuska and J.~M. Melenk.
\newblock The partition of unity method.
\newblock {\em Internat. J. Numer. Methods Engrg.}, 40:727--758, 1997.

\bibitem{BelytschkoGuLu1994}
T.~Belytschko, Y.~Y. Lu, and L.~Gu.
\newblock Element-free {G}alerkin methods.
\newblock {\em Internat. J. Numer. Methods Engrg.}, 37:229--256, 1994.

\bibitem{Brandt1986}
A.~Brandt.
\newblock Algebraic multigrid theory: {T}he symmetric case.
\newblock {\em Appl. Math. Comput.}, 19:23--56, 1986.

\bibitem{Chorin1968}
A.~J. Chorin.
\newblock Numerical solution of the {N}avier-{S}tokes equations.
\newblock {\em Math. Comput.}, 22:745--762, 1968.

\bibitem{Chvatal1983}
V.~Chv{\'a}tal.
\newblock {\em Linear programming}.
\newblock W. H. Freeman and Company, 1983.

\bibitem{CumminsRudman1998}
S.~J. Cummins and M.~Rudman.
\newblock Truly incompressible {SPH}.
\newblock In {\em ASME Fluids Engineering Division Summer Meeting}, Washington
  DC, USA, 1998.

\bibitem{CumminsRudman1999}
S.~J. Cummins and M.~Rudman.
\newblock An {SPH} projection method.
\newblock {\em J.~Comput.~Phys.}, 152(2):584--607, 1999.

\bibitem{DemkowiczKarafiatLiszka1984}
L.~Demkowicz, A.~Karafiat, and T.~Liszka.
\newblock On some convergence results for {FDM} with irregular meshes.
\newblock {\em Comput. Methods Appl. Mech. Engrg.}, 42:343--355, 1984.

\bibitem{Dilts1999}
G.~A. Dilts.
\newblock Moving least squares particles hydrodynamics {I}, consistency and
  stability.
\newblock {\em Internat. J. Numer. Methods Engrg.}, 44:1115--1155, 1999.

\bibitem{Donoho2006_2}
D.~L. Donoho.
\newblock For most large underdetermined systems of linear equations, the
  minimal $\ell^1$-norm near-solution approximates the sparsest near-solution.
\newblock {\em Comm. Pure Appl. Math.}, 59(7):907--934, 2006.

\bibitem{Donoho2006_1}
D.~L. Donoho.
\newblock For most large underdetermined systems of linear equations, the
  minimal $\ell^1$-norm solution is also the sparsest solution.
\newblock {\em Comm. Pure Appl. Math.}, 59(6):797--829, 2006.

\bibitem{DuarteLiszkaTworzyako1996}
L.~Duarte, T.~Liszka, and W.~Tworzyako.
\newblock hp-meshless cloud method.
\newblock {\em Comput. Methods Appl. Mech. Engrg.}, 139(1--7):263--288, 1996.

\bibitem{Evans1998}
L.~C. Evans.
\newblock {\em Partial differential equations}, volume~19 of {\em Graduate
  Studies in Mathematics}.
\newblock American Mathematical Society, 1998.

\bibitem{FujimotoRanade2004}
T.~Fujimoto and R.~R. Ranade.
\newblock Characterization of inverse-positive matrices: {T}he
  {H}awkins-{S}imon condition and the {L}e {C}hatelier-{B}raun principle.
\newblock {\em Electron. J. Linear Algebra}, 11:59--65, 2004.

\bibitem{GingoldMonaghan1977}
R.~A. Gingold and J.~J. Monaghan.
\newblock Smoothed particle hydrodynamics -- {T}heory and application to
  nonspherical stars.
\newblock {\em Mon. Not. R. Astron. Soc.}, 181:375, 1977.

\bibitem{GriebelSchweitzer2000}
M.~Griebel and M.~A. Schweitzer.
\newblock A particle-partition of unity method for the solution of elliptic
  parabolic and hyperbolic {PDE}.
\newblock {\em SIAM J. Sci. Comput.}, 22:853--890, 2000.

\bibitem{GriebelSchweitzer2002_2}
M.~Griebel and M.~A. Schweitzer.
\newblock A particle-partition of unity method -- {P}art {III}: {A} multilevel
  solver.
\newblock {\em SIAM J. Sci. Comput.}, 24:377--409, 2002.

\bibitem{Hackbusch1994}
W.~Hackbusch.
\newblock {\em Iterative Solution of Large Sparse Systems of Equations}.
\newblock Springer, 1994.

\bibitem{KuhnertDiss1999}
J.~Kuhnert.
\newblock {\em General smoothed particle hydrodynamics}.
\newblock Dissertation, Department of Mathematics, University of
  Kaiserslautern, 1999.

\bibitem{KuhnertTiwari2002}
J.~Kuhnert and S.~Tiwari.
\newblock A meshfree method for incompressible fluid flows with incorporated
  surface tension.
\newblock {\em Revue aurope'enne des elements finis}, 11(7--8):965--987, 2002.

\bibitem{KuhnertTiwariPasadena2003}
J.~Kuhnert and S.~Tiwari.
\newblock Particle method for simulation of free surface flows.
\newblock In T.~Hou and E.~Tadmor, editors, {\em Proceedings of the Ninth
  International Conference on Hyperbolic Problems}, pages 889--898. Springer,
  2003.

\bibitem{LancasterSalkauskas1981}
P.~Lancaster and K.~Salkauskas.
\newblock Surfaces generated by moving least squares methods.
\newblock {\em Math. Comp.}, 37:141--158, 1981.

\bibitem{LangvilleMeyer2006}
A.~N. Langville and C.~D. Meyer.
\newblock {\em Google's {P}age{R}ank and beyond: {T}he science of search engine
  rankings}.
\newblock Princeton University Press, 2006.

\bibitem{Levin1998}
D.~Levin.
\newblock The approximation power of moving least--squares.
\newblock {\em Math. Comp.}, 67:1517--1531, 1998.

\bibitem{LiszkaOrkisz1980}
T.~Liszka and J.~Orkisz.
\newblock The finite difference method at arbitrary irregular grids and its
  application in applied mechanics.
\newblock {\em Comput. \& Structures}, 11:83--95, 1980.

\bibitem{Lucy1977}
L.~Lucy.
\newblock A numerical approach to the testing of the fission hypothesis.
\newblock {\em Astronomical Journal}, 82:1013--1024, 1977.

\bibitem{MenseNabben2008_1}
C.~Mense and R.~Nabben.
\newblock On algebraic multilevel methods for non-symmetric systems --
  {C}omparison results.
\newblock {\em Linear Algebra Appl.}, 429(10):2567--2588, 2008.

\bibitem{MenseNabben2008_2}
C.~Mense and R.~Nabben.
\newblock On algebraic multilevel methods for non-symmetric systems --
  {C}onvergence results.
\newblock {\em Electron. Trans. Numer. Anal.}, 20:323--345, 2008.

\bibitem{Monaghan1988}
J.~J. Monaghan.
\newblock An introduction to {SPH}.
\newblock {\em Comput. Phys. Comm.}, 48:89--96, 1988.

\bibitem{Monaghan1992}
J.~J. Monaghan.
\newblock {S}moothed {P}article {H}ydrodynamics.
\newblock {\em Ann. Rev. Astron. Astrophys.}, 30:543--574, 1992.

\bibitem{Notay2000}
Y.~Notay.
\newblock A robust algebraic multilevel preconditioner for non-symmetric
  {M}-matrices.
\newblock {\em Numer. Lin. Alg. Appl.}, 7:243--267, 2000.

\bibitem{Notay2009}
Y.~Notay.
\newblock Algebraic analysis of two-grid methods: {T}he nonsymmetric case.
\newblock {\em Numer. Lin. Alg. Appl.}, to appear, 2009.

\bibitem{PerroneKao1975}
N.~Perrone and R.~Kao.
\newblock A general finite difference method for arbitrary meshes.
\newblock {\em Comput. \& Structures}, 5:45--58, 1975.

\bibitem{QuarteroniSaccoSaleri2000}
A.~Quarteroni, R.~Sacco, and F.~Saleri.
\newblock {\em Numerical mathematics}.
\newblock Springer, 2000.

\bibitem{RugeStueben1986}
J.~W. Ruge and K.~St{\"u}ben.
\newblock Parallel algebraic multigrid ({AMG}).
\newblock In S.~F. MacCormick, editor, {\em Multigrid methods}, volume~5 of
  {\em Frontiers in Applied Mathematics}. SIAM, Philadelphia, PA, 1986.

\bibitem{RugeStueben1987}
J.~W. Ruge and K.~St{\"u}ben.
\newblock Algebraic multigrid.
\newblock In S.~F. MacCormick, editor, {\em Multigrid methods}, volume~3, pages
  73--130. SIAM, Philadelphia, PA, 1987.

\bibitem{Saad1996}
Y.~Saad.
\newblock {\em Iterative methods for sparse linear systems}.
\newblock Society for Industrial and Applied Mathematics, second edition, 1996.

\bibitem{SaadSchultz1986}
Y.~Saad and M.~H. Schultz.
\newblock {GMRES}: {A} generalized minimal residual algorithm for solving
  nonsymmetric linear systems.
\newblock {\em SIAM J. Sci. Stat. Comput.}, 7:856--869, 1986.

\bibitem{SAMG}
{SCAI} {SAMG}.
\newblock Website.
\newblock \url{http://www.scai.fhg.de/samg.html}.

\bibitem{SeiboldDiss2006}
B.~Seibold.
\newblock {\em {M}-Matrices in meshless finite difference methods}.
\newblock Dissertation, Department of Mathematics, University of
  Kaiserslautern, 2006.

\bibitem{SeiboldMeshfree2007}
B.~Seibold.
\newblock Multigrid and {M}-matrices in the finite pointset method for
  incompressible flows.
\newblock In M.~Griebel and M.~A. Schweitzer, editors, {\em Meshfree methods
  for Partial Differential Equations {III}}, pages 219--234. Springer, 2007.

\bibitem{Seibold2008}
B.~Seibold.
\newblock Minimal positive stencils in meshfree finite difference methods for
  the {P}oisson equation.
\newblock {\em Comput. Methods Appl. Mech. Engrg.}, 198(3--4):592--601, 2008.

\bibitem{Shewchuk1999}
J.~R. Shewchuk.
\newblock Lecture notes on {D}elaunay mesh generation.
\newblock Lecture notes, University of California at Berkeley, 1999.

\bibitem{StrangFix1973}
G.~Strang and G.~Fix.
\newblock {\em An Analysis of the finite element method}.
\newblock Series in Automatic Computation. Prentice-Hall, Englewood Cliffs,
  1973.

\bibitem{Stueben2001}
K.~St{\"u}ben.
\newblock A review of algebraic multigrid.
\newblock {\em J. Comput. Appl. Math.}, 128:281--309, 2001.

\bibitem{TrottenbergOosterleeSchuller2001}
U.~Trottenberg, C.~W. Oosterlee, and A.~Sch{\"u}ller.
\newblock {\em Multigrid}.
\newblock Academic Press, 2001.

\bibitem{VanDerVorst1992}
H.~A. van~der Vorst.
\newblock {BI-CGSTAB}: {A} fast and smoothly converging variant of {BI-CG} for
  the solution of nonsymmetric linear systems.
\newblock {\em SIAM J. Sci. Stat. Comput.}, 13(2):631--644, 1992.

\bibitem{Vanderbei2001}
R.~J. Vanderbei.
\newblock {\em Linear programming: {F}oundations and extensions}.
\newblock International Series in Operations Research {\&} Management Science.
  Springer, second edition, 2001.

\bibitem{Varga2000}
R.~S. Varga.
\newblock {\em Matrix iterative analysis}.
\newblock Springer, second edition, 2000.

\end{thebibliography}

\end{document}